\newcommand{\tnorm}{\@ifstar\@tnorms\@tnorm}
\newcommand{\@tnorms}[1]{%
  \left|\mkern-1.5mu\left|\mkern-1.5mu\left|
   #1
  \right|\mkern-1.5mu\right|\mkern-1.5mu\right|
}
\newcommand{\@tnorm}[2][]{%
  \mathopen{#1|\mkern-1.5mu#1|\mkern-1.5mu#1|}
  #2
  \mathclose{#1|\mkern-1.5mu#1|\mkern-1.5mu#1|}
}
\newcommand{\jump}[1]{\llbracket #1 \rrbracket}
\newcommand{\av}[1]{\{\!\!\{#1\}\!\!\}}
\newtheorem{lemma}{Lemma}
\newtheorem{corollary}{Corollary}
\newtheorem{remark}{Remark}
\title{Preconditioning a hybridizable discontinuous Galerkin method
  for Navier--Stokes at high Reynolds number}
\author{ A.~D.~Lindsay\thanks{Computational Frameworks, Idaho National
    Laboratory, Idaho Falls, ID, 83415, USA
    (\url{alexander.lindsay@inl.gov}),
    \url{https://orcid.org/0000-0002-6988-2123}}
  \and
  S.~Rhebergen\thanks{Department of Applied Mathematics, University of
    Waterloo, ON, Canada (\url{srheberg@uwaterloo.ca}),
    \url{https://orcid.org/0000-0001-6036-0356}}
  \and
  B.~S.~Southworth
  \thanks{Theoretical Division,
  Los Alamos National Laboratory,
  Los Alamos, NM 87545, USA (\url{southworth@lanl.gov}),
  \url{https://orcid.org/0000-0002-0283-4928}}
}
\begin{document}
%------------------------------------------------------------------------------
\maketitle
%------------------------------------------------------------------------------
\begin{abstract}
  We introduce a preconditioner for a hybridizable discontinuous
  Galerkin discretization of the linearized Navier--Stokes equations
  at high Reynolds number. The preconditioner is based on an augmented
  Lagrangian approach of the full discretization. Unlike standard
  grad-div type augmentation, however, we consider augmentation based
  on divergence-conformity. With this augmentation we introduce two
  different, well-conditioned, and easy to solve matrices to
  approximate the trace pressure Schur complement. To introduce a
  completely algebraic solver, we propose to use multifrontal sparse
  LU solvers using butterfly compression to solve the trace velocity
  block. Numerical examples demonstrate that the trace pressure Schur
  complement is highly robust in mesh spacing and Reynolds number and
  that the multifrontal inexact LU performs well for a wide range of
  Reynolds numbers.
\end{abstract}
%------------------------------------------------------------------------------
\section{Introduction}
\label{s:introduction}

In this paper we consider the solution of the stationary
incompressible Navier--Stokes equations
\begin{subequations}
  \label{eq:ns}
  \begin{align}
    \label{eq:ns_a}
    -\nabla\cdot(2\mu\varepsilon(u)) + \nabla \cdot (u \otimes u) + \nabla p &= f & & \text{in } \Omega,
    \\
    \label{eq:ns_b}
    \nabla\cdot u &= 0 & & \text{in } \Omega,
    \\
    \label{eq:ns_c}
    u &= 0 & & \text{on } \partial \Omega,
    \\
    \label{eq:ns_d}
    \int_{\Omega} p \dif x &= 0, &&
  \end{align}
\end{subequations}
where $\Omega \subset \mathbb{R}^d$ is a polygonal (if $d=2$) or
polyhedral (if $d=3$) domain, $u : \Omega \to \mathbb{R}^d$ is the
velocity, $p : \Omega \to \mathbb{R}$ is the (kinematic) pressure,
$\varepsilon(u) := (\nabla u + (\nabla u)^T)/2$, $\mu > 0$ is the
constant kinematic viscosity, and $f : \Omega \to \mathbb{R}^d$ is the
external body force. We are interested in advection-dominated flows.

To discretize the Navier--Stokes equations, we consider a
\emph{hybridizable} discontinuous Galerkin method
\cite{Cockburn:2009}. Discontinuous Galerkin methods are known to be
stable, high-order accurate, and locally conservative methods for
advection-dominated flows \cite{Cockburn:2005}. Furthermore, if the
velocity is $H(\text{div};\Omega)$-conforming and exactly
divergence-free, then the DG discretization is also energy-stable
\cite{Cockburn:2007} and pressure-robust \cite{John:2017}. An
additional appealing property of hybridizable DG (HDG) methods is that
they allow for static condensation, a process in which local
degrees-of-freedom are eliminated from the system. Various HDG methods
have been introduced for incompressible flows, for example,
\cite{Cesmelioglu:2017,Cockburn:2014b,Egger:2013,Labeur:2012,Lederer:2018,Lehrenfeld:2016,Nguyen:2011,Qiu:2016}. We
will consider the HDG discretization presented in
\cite{Rhebergen:2018a,Rhebergen:2017}. This discretization results in
an exactly divergence-free and $H(\text{div};\Omega)$-conforming
velocity solution using standard DG spaces. The purpose of this paper
is to present a new preconditioner for this discretization.

To the best of our knowledge, the only preconditioners shown to be
robust in both mesh size and Reynolds number are augmented Lagrangian
(AL) type preconditioners (see, for example,
\cite{Benzi:2006,Benzi:2011b,Benzi:2011,Farrell:2019,Farrell:2021,Laakmann:2022}). It
is for this reason that we consider AL preconditioners in this
paper. However, some care must be taken when developing and applying
an AL preconditioner to the HDG discretization of
\cite{Rhebergen:2018a,Rhebergen:2017}. In most AL approaches for
Navier--Stokes a consistent penalty term
$-\gamma \nabla \nabla \cdot u$ is added to the left hand side of
\cref{eq:ns_a} (either at the algebraic or continuous level). However,
in the HDG method of \cite{Rhebergen:2018a,Rhebergen:2017} the
divergence constraint is a local constraint that is imposed exactly on
each cell. A consequence is that this grad-div penalization is local,
but more importantly, it vanishes after static condensation. As a
result, the Schur complement in the trace pressure remains unchanged
and, unfortunately, is still difficult to approximate. For this
reason, and further motivated by new linear algebra results on the
Schur complement of singularly perturbed saddle point problems we
present, we consider a penalty term based on
$H(\text{div};\Omega)$-conformity, which is amenable to static
condensation and also makes for a simple and effective mass-matrix
based approximation to the trace pressure Schur complement.

Let us mention that alternative AL approaches for the HDG method for
Navier--Stokes have been considered. In \cite{Southworth:2020}, during
the static condensation step, we only eliminated cell velocity
degrees-of-freedom (dofs) instead of cell velocity and cell pressure
dofs as we do in the current paper. In this approach the velocity is
not exactly divergence-free at intermediate iterations and so a
standard grad-div penalty term could be used. However, a large number
of iterations were required to reach convergence (see also
\cite{Sivas:thesis}). An augmented Lagrangian Uzawa iteration method
was proposed in \cite{Fu:2024} for a different HDG method that was
introduced in \cite{Cockburn:2014b,Fu:2019}. They show good results
with only a small dependence on the Reynolds number for $Re<1000$ in
2D and, after adding a pseudo-time integration method, also in 3D.

The outline of this paper is as follows. In \cref{sec:hdgns} we
present the HDG discretization for the Navier--Stokes equations. We
present new matrix-based theory on preconditioning saddle-point
problems with large singular perturbations in \cref{sec:precon}, and
use this to motivate our proposed AL block preconditioning for the HDG
method.  Although AL formulations and preconditioning typically allow
a simple and effective approximation to the Schur complement, much of
the difficulty is transferred to solving the augmented velocity block,
which takes the form of an advection-dominated vector
advection-diffusion equation with a large symmetric singular
perturbation. Here we recognize certain recent developments in the
STRUMPACK library \cite{Claus:2023,Liu:2021} are well-suited to
solving such problems, and are also algebraic (not requiring the
infrastructure of geometric multigrid methods), a particularly useful
property in the context of HDG discretizations. We thus combine the
STRUMPACK solver for the augmented velocity block with our AL
formulation and preconditioning of the coupled system, for a complete
method.  In \cref{sec:num_examples} we demonstrate our proposed
framework on steady lid-driven cavity and backward-facing step
problems up to a Reynolds number of $10^4$. The outer AL-based block
preconditioner is demonstrated to be highly robust in mesh spacing and
Reynolds number, and the inner STRUMPACK solver performs well, only
degrading at very high Reynolds numbers, where the steady state
solution becomes very ill-conditioned. However, as discussed in
\cref{sec:num_examples}, we expect the algebraic preconditioner
proposed here to function effectively for any realistic CFD simulation
conditions. We conclude in \cref{sec:conclusions}.

%------------------------------------------------------------------------------
\section{HDG for the Navier--Stokes equations}
\label{sec:hdgns}

%------------------------------------------------------------------------------
\subsection{Notation and finite element spaces}

Let $\mathcal{T}_h = \cbr[0]{K}$ be a simplicial mesh consisting of
nonoverlapping cells $K$ and such that
$\cup_{K \in \mathcal{T}_h} K = \overline{\Omega}$. Consider two
adjacent cells $K^+$ and $K^-$. An interior face is defined as
$F := \partial K^+ \cap \partial K^-$. A boundary face is defined as a
face of cell $K$ that lies on the boundary. We will denote the set of
all faces by $\mathcal{F}_h$ and the union of all faces by
$\Gamma^0$. The diameter of a cell is denoted by $h_K$ and denote the
outward unit normal vector on $\partial K$ by $n$. Furthermore, for
scalar functions $u$ and $v$, we will write
$(u,v)_{K} = \int_{K}uv \dif x$,
$(u,v)_{\mathcal{T}} = \sum_{K \in \mathcal{T}_h} (u,v)_K$,
$\langle u,v \rangle_{\partial K} = \int_{\partial K} uv \dif s$,
$\langle u,v \rangle_{\partial \mathcal{T}} = \sum_{K \in
  \mathcal{T}_h} \langle u, v\rangle_{\partial K}$,
$\langle u, v \rangle_F = \int_F uv \dif s$, and
$\langle u, v\rangle_{\mathcal{F}} = \sum_{F \in \mathcal{F}_h}
\langle u, v\rangle_F$. Similar notation is used if $u,v$ are vector
or matrix functions. We denote the trace of a vector function $v$ from
the interior of $K^{\pm}$ by $v^{\pm}$ and the outward unit normal to
$K^{\pm}$ by $n^{\pm}$. We then define the jump and average operators
on an interior face as
$\jump{v \cdot n} := v^+\cdot n^+ + v^-\cdot n^-$ and
$\av{v} := \tfrac{1}{2}(v^+ + v^-)$, respectively. On a boundary face
the jump operator is defined as $\jump{v \cdot n} := v \cdot n$ while
the average operator is $\av{v} := 0$.

The velocity on $\Omega$ and trace of the velocity on $\Gamma^0$ are
approximated by functions in the following finite element spaces:
\begin{subequations}
  \begin{align}
    \label{eq:femVh}
    V_h
    &:= \cbr[1]{v_h\in L^2(\Omega)^d
      : \ v_h \in P_k(K)^d, \ \forall\ K\in\mathcal{T}},
    \\
    \label{eq:femVbh}
    \bar{V}_h
    &:= \cbr[1]{\bar{v}_h \in L^2(\Gamma^0)^d:\ \bar{v}_h \in
      P_{k}(F)^d\ \forall\ F \in \mathcal{F},\ \bar{v}_h
      = 0 \text{ on } \partial \Omega},
  \end{align}
\end{subequations}
where $P_k(K)$ and $P_k(F)$ are the sets of polynomials of degree at
most $k \ge 1$ defined on a cell $K \in \mathcal{T}_h$ and face
$F \in \mathcal{F}_h$. Similarly, the pressure on $\Omega$ and trace
of the pressure on $\Gamma^0$ are approximated by functions in
\begin{align*}
  Q_h
  &:= \cbr[1]{q_h\in L^2(\Omega) : \ q_h \in P_{k-1}(K) ,\
    \forall \ K \in \mathcal{T}},
  \\
  \bar{Q}_h
  &:= \cbr[1]{\bar{q}_h \in L^2(\Gamma^0) : \ \bar{q}_h \in
    P_{k}(F) \ \forall\ F \in \mathcal{F}}.
\end{align*}
Product spaces and pairs in these product spaces will be denoted using
boldface. For example,
$\boldsymbol{v}_h := (v_h, \bar{v}_h) \in V_h \times \bar{V}_h =:
\boldsymbol{V}_h$ and
$\boldsymbol{q}_h := (q_h, \bar{v}_h) \in Q_h \times \bar{Q}_h =:
\boldsymbol{Q}_h$.

%------------------------------------------------------------------------------
\subsection{The discretization}

We consider the following HDG discretization of the Navier--Stokes
equations \cite{Rhebergen:2018a}: Find
$(\boldsymbol{u}_h, \boldsymbol{p}_h) \in \boldsymbol{V}_h \times
\boldsymbol{Q}_h$ such that for all
$(\boldsymbol{v}_h, \boldsymbol{q}_h) \in \boldsymbol{V}_h \times
\boldsymbol{Q}_h$:
\begin{subequations}
  \label{eq:discrete_problem}
  \begin{align}
    \label{eq:discrete_problem_a}
    \mu a_h(\boldsymbol{u}_h, \boldsymbol{v}_h)
    + o_h(u_h; \boldsymbol{u}_h, \boldsymbol{v}_h)
    + b_h(\boldsymbol{p}_h, v_h)
    &=
      (f, v)_{\mathcal{T}},
    \\
    \label{eq:discrete_problem_b}
    b_h(\boldsymbol{q}_h, u_h)
    &= 0,
  \end{align}
\end{subequations}
where $a_h(\cdot, \cdot)$ and $o_h(\cdot;\cdot, \cdot)$ are defined as
\begin{align*}
  a_h(\boldsymbol{u}, \boldsymbol{v})
  :=& (2 \varepsilon(u), \varepsilon(v))_{\mathcal{T}}
      + \langle 2 \alpha h_K^{-1}(u-\bar{u}), v-\bar{v} \rangle_{\partial\mathcal{T}}
  \\
    &  - \langle 2\varepsilon(u)n, v-\bar{v} \rangle_{\partial\mathcal{T}}
       - \langle 2\varepsilon(v)n, u-\bar{u} \rangle_{\partial\mathcal{T}},
  \\
  o_h(w; \boldsymbol{u}, \boldsymbol{v})
  :=& - (u\otimes w, \nabla v)_{\mathcal{T}}
      + \tfrac{1}{2} \langle (w\cdot n) (u+\bar{u}) + \envert{w\cdot n}(u-\bar{u}), v-\bar{v} \rangle_{\partial \mathcal{T}},
\end{align*}
and where $b_h(\boldsymbol{q}, v) := b_1(q,v) + b_2(\bar{q},v)$ in
which
\begin{align*}
  b_1(q,v) &:= - (q, \nabla \cdot v)_{\mathcal{T}},
  &
  b_2(\bar{q},v) &:= \langle \bar{q},  v \cdot n \rangle_{\partial\mathcal{T}}.
\end{align*}
The interior penalty parameter $\alpha > 0$ in the definition of
$a_h(\cdot, \cdot)$ needs to be chosen sufficiently large for the HDG
method to be stable (see \cite{Rhebergen:2017}). The HDG
discretization \cref{eq:discrete_problem} was shown in
\cite{Rhebergen:2018a} to result in a discrete velocity solution
$u_h \in V_h$ that is exactly divergence-free on the cells $K$ and
globally $H(\text{div};\Omega)$-conforming. Consequently, the analysis
in \cite{Kirk:2019} shows that this discretization is pressure-robust
\cite{John:2017}.

%------------------------------------------------------------------------------
\subsection{The algebraic formulation}

To solve the nonlinear problem \cref{eq:discrete_problem} we use
Picard or Newton iterations. For the sake of presentation we consider
a Picard iteration here, but numerical examples in
\cref{sec:num_examples} use Newton iterations.

Let $u \in \mathbb{R}^{n_u}$, $\bar{u} \in \mathbb{R}^{\bar{n}_u}$,
$p \in \mathbb{R}^{n_p}_0 := \cbr{q \in \mathbb{R}^{n_p}|q \ne 1}$ and
$\bar{p} \in \mathbb{R}^{\bar{n}_p}$ represent the vectors of degrees
of freedom for $u_h$, $\bar{u}_h$, $p_h$, and $\bar{p}_h$,
respectively. Furthermore, let
$\mathbb{V} := \cbr[0]{ {\bf v} = [v^T\, \bar{v}^T]^T\, : v \in
  \mathbb{R}^{n_u},\ \bar{v} \in \mathbb{R}^{\bar{n}_u}}$ and
$\mathbb{Q} := \cbr[0]{ {\bf q} = [q^T\, \bar{q}^T]^T\, : q \in
  \mathbb{R}^{n_q}_0,\ \bar{q} \in \mathbb{R}^{\bar{n}_q}}$. The forms
$a_h(\cdot, \cdot)$, $o_h(w;\cdot, \cdot)$ for $w$ given,
$b_1(\cdot, \cdot)$, and $b_2(\cdot, \cdot)$ are expressed as matrices
as follows:
\begin{equation}
  \label{eq:def-matrices}
  \begin{aligned}
    a_h(\boldsymbol{v}_h, \boldsymbol{v}_h)
    &
    = [v^T, \bar{v}^T]
    \begin{bmatrix}
      A_{uu} & A_{\bar{u}u}^T \\
      A_{\bar{u}u} & A_{\bar{u}\bar{u}}
    \end{bmatrix}
    \begin{bmatrix}
      v \\ \bar{v}
    \end{bmatrix}
    && \forall {\bf v} \in \mathbb{V},
    \\
    o_h(w;\boldsymbol{v}_h, \boldsymbol{v}_h)
    &
    = [v^T, \bar{v}^T]
    \begin{bmatrix}
      N_{uu} & N_{u\bar{u}} \\
      N_{\bar{u}u} & N_{\bar{u}\bar{u}}
    \end{bmatrix}
    \begin{bmatrix}
      v \\ \bar{v}
    \end{bmatrix}
    && \forall {\bf v} \in \mathbb{V},
    \\
    b_1(q_h, v_h)
    &
    = q^T B_{pu}v
    && \forall q \in \mathbb{R}_0^{n_p},\ v \in \mathbb{R}^{n_u},
    \\
    b_2(\bar{q}_h, v_h)
    &
    = \bar{q}^T B_{\bar{p}u} v
    && \forall \bar{q}^T \in \mathbb{R}^{\bar{n}_q},\ v \in \mathbb{R}^{n_u}.
  \end{aligned}
\end{equation}
Defining also
\begin{equation}
  \label{eq:FANDJ}
  F =
  \begin{bmatrix}
    F_{uu} & F_{u\bar{u}} \\
    F_{\bar{u}u} & F_{\bar{u}\bar{u}}
  \end{bmatrix}
  =
  \begin{bmatrix}
    \mu A_{uu} + N_{uu} & \mu A_{\bar{u}u}^T + N_{u\bar{u}} \\
    \mu A_{\bar{u}u} + N_{\bar{u}u}   & \mu A_{\bar{u}\bar{u}} + N_{\bar{u}\bar{u}}
  \end{bmatrix},
\end{equation}
the algebraic form of the linearized HDG method that needs to be
solved at each Picard iteration can be written as:
\begin{equation}
  \label{eq:sclinsystemAL-reor}
  \begin{bmatrix}
    F_{uu} & B_{pu}^T & F_{u\bar{u}} & B_{\bar{p}u}^T \\
    B_{pu} & 0 & 0 & 0 \\
    F_{\bar{u}u} & 0 & F_{\bar{u}\bar{u}} & 0 \\
    B_{\bar{p}u} & 0 & 0 & 0
  \end{bmatrix}
  \begin{bmatrix}
    u \\ p \\ \bar{u} \\ \bar{p}
  \end{bmatrix}
  =
  \begin{bmatrix}
    L_u \\
    0 \\ 0 \\ 0
  \end{bmatrix},
\end{equation}
where $L_u$ is the vector representation of the right hand side of
\cref{eq:discrete_problem_a}.

Since the matrices $F_{uu}$ and $B_{pu}$ are block diagonal with one
block per cell, $u$ and $p$ can be eliminated locally via static
condensation.

%------------------------------------------------------------------------------
\section{Preconditioning}
\label{sec:precon}

In this section we present preconditioning for the linearized HDG
method for Navier--Stokes. However, instead of preconditioning
\cref{eq:sclinsystemAL-reor} directly, we consider a modified version
of \cref{eq:sclinsystemAL-reor} that is easier to precondition. For
this, similar to the Augmented Lagrangian approach of
\cite{Benzi:2006}, we augment the original problem
\cref{eq:sclinsystemAL-reor} with a suitable physically consistent
penalty term that makes the Schur complement easier to approximate
(see \cref{ss:preconditioningHDG}).

We preface this section by recalling that for solving nonsingular
$2\times 2$ block linear systems
\begin{equation}
  \label{eq:2x2}
  \begin{bmatrix}
    A & B \\
    C & D
  \end{bmatrix}
  \begin{bmatrix}
    \mathbf{x} \\
    \mathbf{y}
  \end{bmatrix}
  =
  \begin{bmatrix}
    \mathbf{f} \\
    \mathbf{g}
  \end{bmatrix},
\end{equation}
a standard approach is to apply a block triangular preconditioner,
where one diagonal block is (approximately) inverted and the
complementary Schur complement is approximated. For example, a block
lower triangular preconditioner would take the form
\begin{equation}
  \label{eq:triS}
  \mathbb{P} =
  \begin{bmatrix}
    A & 0
    \\ C & \widehat{S}
  \end{bmatrix}^{-1},
\end{equation}
where
\begin{equation}
  \label{eq:S}
  \widehat{S} \approx S := D - CA^{-1}B.
\end{equation}
For approximate block triangular or block LDU preconditioners of this
form, convergence of fixed-point and minimal residual Krylov methods
is fully defined by the convergence of an equivalent method applied to
$S$, preconditioned with $\widehat{S}$. For more details, see
\cite{Southworth:2020}. The important point, which we use as the
primary objective in designing our preconditioner, is that the key
component to effective preconditioning of \cref{eq:2x2} is an accurate
and computable approximation $\widehat{S}^{-1}\approx S^{-1}$.

In \cref{ss:singularperturb} we first present some general results on
inverse singular perturbations. We then proceed in
\cref{ss:preconditioningHDG} to formulate a preconditioner for the HDG
method. In this section we will denote the nullspace and range of a
matrix $A$ by $\mathcal{N}(A)$ and $\mathcal{R}(A)$, respectively.

%------------------------------------------------------------------------------
\subsection{On the inverse of singular perturbations}
\label{ss:singularperturb}

AL-preconditioning is built around adding a symmetric singular
perturbation to the leading block in a $2\times 2$ system, which makes
the Schur complement easier to approximate. Here we build on linear
algebra theory from \cite[Lemma 10]{Haut:2020}, and derive an inverse
expansion for the Schur complement of augmented saddle-point systems
with respect to perturbation parameter $\gamma \gg 0$.

\begin{lemma}[Singular perturbation of saddle-point systems]
  \label{lem:singularMatrixPerturbation}
  Let $A$ be a nonsingular $n \times n$ matrix, $J \ne 0$ be a
  symmetric, singular $n \times n$ matrix, and $B$ be an $m \times n$
  matrix ($m < n$) such that $\mathcal{N}(J)\subseteq \mathcal{N}(B)$
  and $\mathcal{R}(B^T) = \mathcal{R}(J)$. Let $\gamma>0$ be a scalar
  constant and assume the following matrix is nonsingular:
  \begin{equation*}
    \mathbb{A} =
    \begin{bmatrix}
      A + \gamma J & B^T \\
      B & \mathbf{0}
    \end{bmatrix}.
  \end{equation*}
  Let $P$ be an orthogonal projection onto $\mathcal{N}(J)$, let
  $Q=I-P$ denote its complement, and define
  \begin{equation*}
    E_{P} = P \del[0]{PAP}^{-1} P,
    \quad
    E_{Q} = Q \del[0]{ QJQ }^{-1} Q,
  \end{equation*}
  as projected inverses over the range of $P$ and $Q$. Let
  $S = -B(A+\gamma J)^{-1}B^T$ denote the Schur complement of
  $\mathbb{A}$. The action of $S$ takes the form
  \begin{equation}
    \label{eq:S-exp-lemma}
    -S\mathbf{x}
    = \gamma^{-1} BE_QB^T\mathbf{x} +  \gamma^{-2} BR_{\gamma} B^T\mathbf{x},
  \end{equation}
  where
  \begin{equation}
    \label{eq:Rgamdef}
    R_{\gamma}
    = \del[1]{ I+\gamma^{-1} E_{Q}A(I - E_PA) }^{-1} E_{Q}A(I - E_PA)E_Q,
  \end{equation}
  and $E_PA$ is a projection onto $\mathcal{N}(J)$.
\end{lemma}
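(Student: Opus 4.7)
The plan is to exploit the prescribed range/nullspace alignment to reduce $-S = B(A+\gamma J)^{-1}B^T$ to a restriction of $(A+\gamma J)^{-1}$ onto $\mathcal{R}(J) = \mathcal{R}(Q)$, and then expand that restriction as a series in $\gamma^{-1}$. Symmetry of $J$ gives $\mathcal{R}(J) = \mathcal{N}(J)^{\perp} = \mathcal{R}(Q)$, so the hypothesis $\mathcal{R}(B^T) = \mathcal{R}(J)$ (combined with $\mathcal{N}(J) \subseteq \mathcal{N}(B)$) yields $B^T = QB^T$ and $BP = 0$. Hence $-S = BQ(A+\gamma J)^{-1}QB^T$, and it suffices to analyse $Q(A+\gamma J)^{-1}Q$ restricted to $\mathcal{R}(Q)$.

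I would then work in the $\mathcal{R}(P)\oplus\mathcal{R}(Q)$ block decomposition. Because $PJ = JP = 0$, the matrix $A + \gamma J$ has block form $\begin{pmatrix} PAP & PAQ \\ QAP & QAQ + \gamma QJQ \end{pmatrix}$ with $QJQ$ invertible on $\mathcal{R}(Q)$. Applying the standard identity $(M^{-1})_{22} = \bigl(M_{22} - M_{21}M_{11}^{-1}M_{12}\bigr)^{-1}$ and writing $\widetilde A := QAQ - QAP(PAP)^{-1}PAQ$ for the Schur complement of $PAP$ in $A$, I obtain $Q(A+\gamma J)^{-1}Q\bigl|_{\mathcal{R}(Q)} = (\widetilde A + \gamma QJQ)^{-1}$. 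I would then apply the elementary resolvent identity
\[
 (X + \gamma Y)^{-1} = \gamma^{-1}Y^{-1} - \gamma^{-1}Y^{-1}X\,(X+\gamma Y)^{-1}
\]
with $X = \widetilde A$, $Y = QJQ$, together with the factorisation $(X+\gamma Y)^{-1} = \gamma^{-1}(I + \gamma^{-1}Y^{-1}X)^{-1}Y^{-1}$, in order to collect the $\gamma^{-2}$ remainder in closed form.

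The last step is to rewrite the outcome in the coordinate-free language of $E_P$ and $E_Q$. The key observation is that $E_P A$ is an oblique projection onto $\mathcal{N}(J)$: the identity $(E_P A)^2 = P(PAP)^{-1}(PAP)(PAP)^{-1}PA = E_P A$ with $\mathcal{R}(E_P A) = \mathcal{R}(P)$ gives idempotency, so $PA(I - E_P A) = 0$ and $A(I-E_P A) = QA(I - E_P A)$. A short block computation then shows that $A(I-E_P A)$ coincides with $\widetilde A$ as an operator $\mathcal{R}(Q) \to \mathcal{R}(Q)$, whence $E_Q A(I-E_P A)$ acts as $(QJQ)^{-1}\widetilde A$ on $\mathcal{R}(Q)$, and the $\gamma^{-2}$ correction reassembles into $\gamma^{-2} B R_\gamma B^T$. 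The main obstacle is this final bookkeeping step: one must commute $(I + \gamma^{-1}(QJQ)^{-1}\widetilde A)^{-1}$ past $(QJQ)^{-1}\widetilde A$ (which is legal because both are rational functions of the same operator) and track signs carefully when combining the resolvent identity with the symmetric placement of $E_Q$ at both ends of $R_\gamma$; this is also where citing Lemma~10 of \cite{Haut:2020} would short-cut the algebra.
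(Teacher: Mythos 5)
Your route is genuinely different from the paper's. The paper's proof is essentially a short corollary of the inverse expansion it quotes from \cite[Lemma 10]{Haut:2020}: it states that expansion for $(A+\gamma J)^{-1}$, simplifies $(A-AE_PA)Q = A(I-E_PA)$, and then uses $BP = PB^T = \mathbf{0}$ to annihilate the $E_P$ terms after sandwiching with $B(\cdot)B^T$. You instead rederive the expansion from scratch: split $\mathbb{R}^n = \mathcal{R}(P)\oplus\mathcal{R}(Q)$, use the $(2,2)$ block-inverse identity to get $Q(A+\gamma J)^{-1}Q|_{\mathcal{R}(Q)} = (\widetilde{A} + \gamma QJQ)^{-1}$ with $\widetilde{A} = QAQ - QAP(PAP)^{-1}PAQ$, and then apply the resolvent identity. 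The intermediate facts you invoke all check out: $BP=0$ and $B^T = QB^T$ follow from the hypotheses, $QJQ$ is invertible on $\mathcal{R}(Q)$, $E_PA$ is idempotent with range $\mathcal{R}(P)$ so that $A(I-E_PA) = QA(I-E_PA)$ and $QA(I-E_PA)Q = \widetilde{A}$, and $E_QA(I-E_PA)$ acts as $(QJQ)^{-1}\widetilde{A}$ on $\mathcal{R}(Q)$. Your approach buys a self-contained elementary proof (and implicitly makes visible the hidden assumption that $PAP$ is invertible on $\mathcal{R}(P)$), at the cost of the final bookkeeping; the paper buys brevity at the cost of importing, and correcting a typo in, the external lemma.

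The one substantive point is the sign you left as ``track carefully.'' Completing your resolvent step gives $(\widetilde{A} + \gamma QJQ)^{-1} = \gamma^{-1}(QJQ)^{-1} - \gamma^{-2}\bigl(I+\gamma^{-1}(QJQ)^{-1}\widetilde{A}\bigr)^{-1}(QJQ)^{-1}\widetilde{A}(QJQ)^{-1}$ on $\mathcal{R}(Q)$, so your derivation yields $-S\mathbf{x} = \gamma^{-1}BE_QB^T\mathbf{x} - \gamma^{-2}BR_\gamma B^T\mathbf{x}$ with $R_\gamma$ exactly as in \cref{eq:Rgamdef}, i.e.\ a minus sign on the second term, whereas \cref{eq:S-exp-lemma} is printed with a plus. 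A one-line check confirms your sign is the correct one: take $n=2$, $A=I$, $J=\operatorname{diag}(0,1)$, $B=(0\;\;1)$; then $-S=(1+\gamma)^{-1}$, $BE_QB^T=1$, $BR_\gamma B^T=\gamma/(1+\gamma)$, and $\gamma^{-1}-\gamma^{-2}\,\gamma/(1+\gamma)=(1+\gamma)^{-1}$, while the plus sign would give $(2+\gamma)/(\gamma(1+\gamma))$. So the discrepancy is not a flaw in your method but a sign slip in the statement as transcribed from \cite{Haut:2020} (equivalently, $R_\gamma$ should carry the opposite sign); it is immaterial for the downstream use of the lemma, which only needs the $O(\gamma^{-1})$ leading term. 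Do carry out that last step explicitly and state the resulting sign, since as written your plan stops just short of the place where the printed statement and the correct computation part ways.
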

\begin{proof}
  A rescaling of the result from \cite[Lemma 10]{Haut:2020} on an
  inverse expansion for singular perturbations yields the
  following:\footnote{ Note, there is a typo in the statement of
    \cite[Lemma 10]{Haut:2020}; $R_\varepsilon$ as occurs in
    \cite[Eq. (34)]{Haut:2020} should not have a leading factor of
    $\varepsilon$; this constant was already accounted for in the
    inverse expansion in the third term of
    \cite[Eq. (34)]{Haut:2020}.}
  \begin{equation*}
    (A + \gamma J)^{-1}\mathbf{x}
    =
    E_{P}\mathbf{x}
    + \frac{1}{\gamma}\del[0]{I-E_{P}A}E_{Q}\del[0]{I-AE_{P}}\mathbf{x}
    + \frac{1}{\gamma^2}\del[0]{I-E_{P}A}R_{\gamma} \del[0]{I-AE_{P}}\mathbf{x},
  \end{equation*}
  where
  \begin{equation}
    R_{\gamma} = \del[1]{ I+\gamma^{-1} E_{Q} \del[0]{A-AE_{P}A} Q }^{-1}E_{Q} A\del[0]{ I-E_{P}A }E_Q.
  \end{equation}
  Recalling $Q = I-P$,
  \begin{align*}
    \del[0]{A-AE_{P}A}Q
    & = A(I - P(PAP)^{-1}PA)(I - P) \\
    & = A(I - P(PAP)^{-1}PA) \\
    & = A(I - E_PA),
  \end{align*}
  which yields \cref{eq:Rgamdef}.  Then, note that since $P$ projects
  onto $\mathcal{N}(J)\subseteq\mathcal{N}(B)$, we have
  $BP = PB^T = \mathbf{0}$.  This yields
  \begin{align*}
    -S\mathbf{x}
    & = B(A + \gamma J)^{-1}B^T\mathbf{x}
    \\
    & = B\sbr[2]{E_{P}+ \gamma^{-1}\del[0]{I-E_{P}A}E_{Q}\del[0]{I-AE_{P}}
      +\gamma^{-2}\del[0]{I-E_{P}A}R_{\gamma}\del[0]{I-AE_{P}} } B^T\mathbf{x}
    \\
    & = \gamma^{-1} BE_QB^T\mathbf{x} + \gamma^{-2} BR_{\gamma} B^T\mathbf{x},
  \end{align*}
  which is the desired result.
\end{proof}

\Cref{lem:singularMatrixPerturbation} provides an inverse expansion
for arbitrary symmetric singular perturbation $J$ with nullspace
contained within that of $B$. Often in saddle-point problems and
AL-type preconditioners, one chooses $J$ of the specific form
$J = B^TM^{-1}B$ for mass matrix $M$. Such a choice also leads to a
further simplification of the action of Schur complement $S$, which is
derived in the following corollary.

\begin{corollary}[Augmented Lagrangian Schur complements]
  \label{cor:2x2}
  Let $A$, $B$, and $\gamma$ be as in
  \cref{lem:singularMatrixPerturbation}, and let $M$ be an
  $m \times m$ symmetric positive definite (SPD) matrix. Consider the
  matrix
  \begin{equation}
    \label{eq:2x2-AL}
    \mathbb{A} =
    \begin{bmatrix}
      A + \gamma B^TM^{-1}B & B^T
      \\
      B & \mathbf{0}
    \end{bmatrix}.
  \end{equation}
  The action of the Schur complement $S$ of $\mathbb{A}$ takes the
  form
  \begin{equation*}
    -S\mathbf{x}
    = \gamma^{-1} M\mathbf{x}
    + \gamma^{-2} BR_\gamma B^T\mathbf{x}.
  \end{equation*}
\end{corollary}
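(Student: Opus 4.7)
The plan is to invoke \cref{lem:singularMatrixPerturbation} with the specific choice $J = B^T M^{-1} B$ and then show that the leading-order Schur complement term collapses to $\gamma^{-1} M$. First I would verify the hypotheses. Since $M$ is SPD, so is $M^{-1}$, which makes $J$ symmetric and positive semidefinite; the identity $\mathcal{N}(J) = \mathcal{N}(B)$ follows from $\mathbf{y}^T J \mathbf{y} = (B\mathbf{y})^T M^{-1} (B\mathbf{y})$, giving in particular $\mathcal{N}(J) \subseteq \mathcal{N}(B)$. The inclusion $\mathcal{R}(J) \subseteq \mathcal{R}(B^T)$ is immediate, and the reverse follows from $\mathrm{rank}(B^T M^{-1} B) = \mathrm{rank}(B)$ since $M^{-1}$ is nonsingular. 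Nonsingularity of $\mathbb{A}$ implicitly forces $B$ to have full row rank, which I will use below.

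Applying \cref{lem:singularMatrixPerturbation} then yields
\begin{equation*}
  -S\mathbf{x} = \gamma^{-1} B E_Q B^T \mathbf{x} + \gamma^{-2} B R_\gamma B^T \mathbf{x},
\end{equation*}
so the only substantive task is to establish $B E_Q B^T = M$. Since $Q$ is the orthogonal projection onto $\mathcal{R}(J) = \mathcal{R}(B^T)$, I would use that $Q B^T = B^T$ and $B Q = B$, and that $\mathcal{R}(J) \subseteq \mathcal{R}(Q)$ gives $QJ = JQ = J$. With $\mathbf{y} := E_Q B^T \mathbf{x}$, the defining relation $(QJQ)\mathbf{y} = Q B^T \mathbf{x}$ then reduces to $J\mathbf{y} = B^T \mathbf{x}$, and $\mathbf{y}$ lies in $\mathcal{R}(Q) = \mathcal{R}(B^T)$. (The restriction $QJQ|_{\mathcal{R}(Q)}$ is invertible because $\mathcal{R}(B^T) \cap \mathcal{N}(B) = \{0\}$, so $\mathbf{y}$ is well defined.)

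Writing $\mathbf{y} = B^T \mathbf{w}$ for some $\mathbf{w}$, the equation $B^T M^{-1} B B^T \mathbf{w} = B^T \mathbf{x}$ combined with the injectivity of $B^T$ (from full row rank of $B$) gives $M^{-1} B B^T \mathbf{w} = \mathbf{x}$, i.e., $B B^T \mathbf{w} = M \mathbf{x}$. Hence $B E_Q B^T \mathbf{x} = B \mathbf{y} = B B^T \mathbf{w} = M \mathbf{x}$, and substituting back yields the claimed expansion. The main obstacle is purely bookkeeping in step three: making the identifications $Q B^T = B^T$, $J\mathbf{y} \in \mathcal{R}(Q)$, and $\mathbf{y} \in \mathcal{R}(B^T)$ precise enough to justify stripping the projections. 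Once those are in place, the collapse of $B E_Q B^T$ to $M$ is a one-line calculation.
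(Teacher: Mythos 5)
Your proof is correct and shares the paper's skeleton---verify the hypotheses of \cref{lem:singularMatrixPerturbation} for $J=B^TM^{-1}B$ and then show the leading-order term satisfies $BE_QB^T=M$---but the key identity is established by a genuinely different route. The paper writes the projectors explicitly, $Q=B^T(BB^T)^{-1}B$, expands $E_Q=Q(QJQ)^{-1}Q$, and evaluates $M^{-1/2}B(QB^TM^{-1}BQ)^{-1}B^TM^{-1/2}$ via an SVD $B^TM^{-1/2}=L\Sigma R^T$, reducing the claim to $\Sigma^T(\Sigma\Sigma^T)^{-1}\Sigma=I_m$ with the inverse interpreted on the range of nonzero singular values (effectively a pseudoinverse). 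You instead characterize $\mathbf{y}:=E_QB^T\mathbf{x}$ abstractly as the unique element of $\mathcal{R}(Q)=\mathcal{R}(B^T)$ satisfying $J\mathbf{y}=B^T\mathbf{x}$, using the identities $QB^T=B^T$ and $QJ=JQ=J$ (valid because $J$ is symmetric with $\mathcal{R}(J)=\mathcal{R}(B^T)$, so $Q$ is exactly the orthogonal projection onto $\mathcal{R}(B^T)$) together with invertibility of $QJQ$ on $\mathcal{R}(Q)$, which follows from $\mathcal{R}(B^T)\cap\mathcal{N}(B)=\{0\}$; writing $\mathbf{y}=B^T\mathbf{w}$ and cancelling the injective factor $B^T$ then gives $BB^T\mathbf{w}=M\mathbf{x}$ and hence $B\mathbf{y}=M\mathbf{x}$. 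Your route is more elementary (no SVD, no discussion of inverting $\Sigma\Sigma^T$ on its range) and it makes explicit a hypothesis the paper uses silently: nonsingularity of $\mathbb{A}$ forces $B$ to have full row rank, which the paper's formula $(BB^T)^{-1}$ requires in any case. Your verification of the lemma's hypotheses ($\mathcal{N}(J)=\mathcal{N}(B)$, and $\mathcal{R}(J)=\mathcal{R}(B^T)$ via the rank argument with $M^{-1}$ nonsingular) is also sound, so the proposal stands as a complete and somewhat cleaner alternative to the paper's computation.
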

\begin{proof}
  Let $P$ be the $\ell^2$-orthogonal projection on to the nullspace of
  $B^TM^{-1}B$, given by
  \begin{equation*}
    P
    := (I - B^TM^{-1/2}(M^{-1/2}BB^TM^{-1/2})^{-1}M^{-1/2}B)
    = (I - B^T(BB^T)^{-1}B),
  \end{equation*}
  and let
  \begin{equation*}
    Q
    := I-P
    = B^TM^{-1/2}(M^{-1/2}BB^TM^{-1/2})^{-1}M^{-1/2}B
    = B^T(BB^T)^{-1}B,
  \end{equation*}
  denote its complement. Then observe that
  \begin{align*}
    & M^{-1/2}B(QB^TM^{-1}BQ)^{-1}B^TM^{-1/2}\mathbf{x}
    \\
    & = M^{-1/2}B\Big[B^TM^{-1/2}(M^{-1/2}BB^TM^{-1/2})^{-1}M^{-1/2}BB^TM^{-1/2}
    \\ & \hspace{15ex} M^{-1/2}BB^TM^{-1/2}(M^{-1/2}BB^TM^{-1/2})^{-1}M^{-1/2}B\Big]^{-1}B^TM^{-1/2}\mathbf{x}
    \\
    & = M^{-1/2}B\del[1]{ B^TM^{-1/2}M^{-1/2}B }^{-1}B^TM^{-1/2}\mathbf{x}.
  \end{align*}
  To analyze this final term, let $B^TM^{-1/2}$ have SVD
  $B^TM^{-1/2} = L\Sigma R^T$, for orthogonal left and right singular
  vectors as columns of $L\in\mathbb{R}^{n\times n}$ and
  $R\in\mathbb{R}^{m\times m}$, and singular values as nonzero
  diagonal of $\Sigma\in\mathbb{R}^{n\times m}$. Let
  $\Sigma^T\Sigma\in\mathbb{R}^{m\times m}$ with positive diagonal
  entries and $\Sigma\Sigma^T\in\mathbb{R}^{n\times n}$ have $m$
  positive diagonal entries and $n-m$ zeros corresponding to the
  nullspace. Then
  \begin{align*}
    M^{-1/2}B(QB^TM^{-1}BQ)^{-1}B^TM^{-1/2}\mathbf{x}
    & = R\Sigma^TL^T\del[1]{L\Sigma R^TR\Sigma^TL^T}^{-1}L\Sigma R^T\mathbf{x}
    \\
    & = R\Sigma^T\del[1]{\Sigma \Sigma^T}^{-1}\Sigma R^T\mathbf{x}.
  \end{align*}
  Note we have changed the basis via the SVD, and here the inverse
  implies we are inverting on the space of nonzero singular values,
  where $\Sigma\Sigma^T\in\mathbb{R}^{n\times n}$ is nonzero in the
  leading $m$ diagonal entries and zero in the final $n-m$ entries. In
  this setting, this is trivially equivalent to a pseudoinverse. The
  result is that
  $\Sigma^T \del[0]{ \Sigma \Sigma^T }^{-1}\Sigma = I_m$, implying
  \begin{equation*}
    B(QB^TM^{-1}BQ)^{-1}B^TM^{-1/2}\mathbf{x} = M^{1/2}\mathbf{x}
    \quad \Longleftrightarrow \quad
    B(QB^TM^{-1}BQ)^{-1}B^T\mathbf{y} = M\mathbf{y},
  \end{equation*}
  for any $\mathbf{x} = M^{1/2}\mathbf{y}$. Given the assumption that
  $M$ is SPD, this applies for all vectors
  $\mathbf{y}\in\mathbb{R}^m$. Plugging into \cref{eq:S-exp-lemma}
  completes the proof.
\end{proof}

Thus to order $1/\gamma$, the Schur complement of \cref{eq:2x2-AL} can
be preconditioned by $\gamma^{-1}M$. It should be noted that this
approximation may not be $h$-independent, in the sense that the
residual term $R_{\gamma}$ can have a complex
eigenvalue/field-of-value structure that may depend on mesh-spacing
$h$. To this end, robust preconditioning may require relatively large
$\gamma$.

%------------------------------------------------------------------------------
\subsection{Preconditioning for HDG}
\label{ss:preconditioningHDG}

The results of \cref{ss:singularperturb} are now used to design
preconditioners for the linearized form of the HDG discretization
\cref{eq:discrete_problem} augmented with a suitable penalization
term. We begin with a traditional AL-like preconditioning along the
lines of $J = BM^{-1}B^T$ (see \Cref{cor:2x2}) in
\Cref{ss:preconditioningHDG:duu}. However, such an approach does not
facilitate static condensation, so we then use the more general result
in \Cref{lem:singularMatrixPerturbation} to motivate an approximate
preconditioner by focusing on a local representation of the nullspace
of $B$.

%------------------------------------------------------------------------------
\subsubsection{AL-like preconditioning}
\label{ss:preconditioningHDG:duu}

We begin with an augmentation following a traditional AL-approch for
incompressible Navier Stokes. Rather than a grad-div augmentation
though, define (see, for example, \cite{Akbas:2018})
\begin{equation}
  \label{eq:dhu}
  d_h(u,v) = \langle h_F^{-1} \jump{u \cdot n}, \jump{v \cdot n}
  \rangle_{\mathcal{F}},
\end{equation}
and let $D_{uu} \in \mathbb{R}^{n_u \times n_u}$ be the matrix such
that $d_h(v_h, v_h) = v^TD_{uu}v$.  The following Lemma proves that
$d_h(u,v)$ is the weak form associated with an augmentation
$D_{uu} = B_{\bar{p}u}^T\bar{M}^{-1}B_{\bar{p}u}$.

\begin{lemma}
  \label{lem:factDuu}
  Let $\bar{M}$ be the trace pressure mass matrix defined by
  $\norm[0]{\bar{q}_h}_p^2 := \bar{q}^T\bar{M} \bar{q}$, where
  $\norm[0]{\bar{q}_h}_p^2 := \sum_{F \in \mathcal{F}_h} h_F
  \norm[0]{\bar{q}_h}_{F}^2$ for all $\bar{q}_h \in \bar{Q}_h$. It
  holds that $D_{uu} = B_{\bar{p}u}^T\bar{M}^{-1}B_{\bar{p}u}$.
\end{lemma}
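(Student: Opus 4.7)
The plan is to verify the claimed factorization by reducing both sides to the quadratic form $d_h(v_h,v_h)$ face-by-face, exploiting the fact that the normal jump of a $V_h$-function lies in exactly the polynomial space $P_k(F)$ that defines the trace pressure space $\bar{Q}_h$ on each face.

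First, I would rewrite $b_2$ as a sum over faces rather than cell boundaries. Since $\bar{q}_h$ is single-valued on each face and, on a boundary face, $\bar{q}_h$ satisfies no constraint but $\jump{v_h\cdot n} = v_h \cdot n$, standard DG bookkeeping gives
\begin{equation*}
  b_2(\bar{q}_h,v_h)
  = \langle \bar{q}_h, v_h \cdot n \rangle_{\partial\mathcal{T}}
  = \langle \bar{q}_h, \jump{v_h\cdot n} \rangle_{\mathcal{F}}.
\end{equation*}
Second, I would observe that both $\bar{M}$ and the Gram-style product $B_{\bar{p}u}^T\bar{M}^{-1}B_{\bar{p}u}$ decompose face-by-face: basis functions of $\bar{Q}_h$ are supported on a single face, so $\bar{M}$ is block diagonal with blocks $h_F M_F$, where $M_F$ has entries $\langle \bar{\phi}_i,\bar{\phi}_j\rangle_F$. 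Similarly, the rows of $B_{\bar{p}u}$ grouped by face give face-local blocks $V_F$ with $(V_F v)_i = \langle \bar{\phi}_i, \jump{v_h\cdot n}\rangle_F$. Therefore
\begin{equation*}
  v^T B_{\bar{p}u}^T \bar{M}^{-1} B_{\bar{p}u}\, v
  = \sum_{F\in\mathcal{F}_h} h_F^{-1}\, v^T V_F^T M_F^{-1} V_F\, v.
\end{equation*}

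Third — and this is the crucial step — since $v_h|_{K^\pm}\in P_k(K^\pm)^d$, the normal-jump $\jump{v_h\cdot n}$ restricted to any face $F$ lies in $P_k(F)$, which is precisely the space spanned by the face-basis $\{\bar{\phi}_i\}_{i\in F}$. Thus $V_F v$ is the vector of $L^2(F)$ inner products of $\jump{v_h\cdot n}$ against a basis of the space containing it, so $M_F^{-1}V_F v$ gives the coefficients of $\jump{v_h\cdot n}|_F$ itself in that basis, and
\begin{equation*}
  v^T V_F^T M_F^{-1} V_F v
  = \langle \jump{v_h\cdot n}, \jump{v_h\cdot n}\rangle_F
  = \norm[0]{\jump{v_h\cdot n}}_F^2.
\end{equation*}
Summing over faces yields $v^T B_{\bar{p}u}^T\bar{M}^{-1}B_{\bar{p}u} v = \sum_F h_F^{-1}\norm[0]{\jump{v_h\cdot n}}_F^2 = d_h(v_h,v_h) = v^T D_{uu} v$. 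Since both matrices are symmetric and this holds for all $v$, the factorization follows by polarization.

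The main obstacle is the polynomial-containment step: writing $b_2$ as a face integral against $\jump{v_h\cdot n}$ is routine, and decomposing the mass matrix by face is essentially definitional, but the identity relies specifically on $\jump{V_h\cdot n}|_F \subseteq P_k(F) = \bar{Q}_h|_F$, which is exactly why the velocity space $V_h$ (degree $k$) is paired with a trace pressure space of the same degree $k$ in this HDG discretization. Without this match the $L^2$-projection onto $\bar{Q}_h|_F$ would not be the identity and the equality would degrade to an inequality.
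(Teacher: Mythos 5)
Your proof is correct, but it takes a genuinely different route from the paper's. You verify the matrix identity directly: you rewrite $b_2(\bar{q}_h,v_h)=\langle \bar{q}_h,\jump{v_h\cdot n}\rangle_{\mathcal{F}}$, exploit the face-by-face block structure of $\bar{M}$ (blocks $h_F M_F$) and of $B_{\bar{p}u}$, and then use the key structural fact that $\jump{v_h\cdot n}|_F\in P_k(F)=\bar{Q}_h|_F$ (valid because the mesh is simplicial, so $n$ is constant on each flat face), which makes the $L^2(F)$-projection onto the trace space act as the identity on normal jumps; polarization of the resulting equality of quadratic forms finishes the argument. The paper instead proves the identity indirectly: it introduces two auxiliary discrete problems --- a jump-penalized $L^2$ problem for $u_h^\gamma$ and a mixed formulation with a trace multiplier $\bar{p}_h$ --- shows they have the same velocity solution for every right-hand side, writes both in matrix form, eliminates $\bar{p}$ from the mixed system, and compares the operators $M_u+\gamma D_{uu}$ and $M_u+\gamma B_{\bar{p}u}^T\bar{M}^{-1}B_{\bar{p}u}$. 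Your argument is more elementary and makes explicit \emph{why} the factorization holds (the degree match between normal traces of $V_h$ and $\bar{Q}_h$, without which one would only get an inequality, as you note); the paper's variational route avoids basis-level bookkeeping and mirrors the mixed/augmented viewpoint that the preconditioner itself is built on. The only point worth stating explicitly in your write-up is the constant-normal/flat-face assumption underlying the containment $\jump{v_h\cdot n}|_F\in P_k(F)$.
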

\begin{proof}
  See Appendix \ref{app:proofs}.
\end{proof}

Defining
\begin{equation*}
  A :=
  \begin{bmatrix}
    F_{uu} & B_{pu}^T & F_{u\bar{u}} \\
    B_{pu} & 0 & 0  \\
    F_{\bar{u}u} & 0 & F_{\bar{u}\bar{u}}
  \end{bmatrix},
\end{equation*}
as the matrix over $\cbr[0]{u, p, \bar{u}}$, and
$B=\begin{bmatrix} B_{\bar{p}u} & 0 & 0 \end{bmatrix}$, \cref{cor:2x2}
together with \cref{lem:factDuu} suggest the following (block
lower-triangular) preconditioner
\begin{equation}
  \label{eq:precon_global}
  \mathbb{P}_D =
  \begin{bmatrix}
    F_{uu} + \gamma D_{uu} & B_{pu}^T & F_{u\bar{u}} & 0 \\
    B_{pu} & 0 & 0 & 0 \\
    F_{\bar{u}u} & 0 & F_{\bar{u}\bar{u}} & 0 \\
    B_{\bar{p}u} & 0 & 0 & -\gamma^{-1} \bar{M}
  \end{bmatrix},
\end{equation}
as an augmented block preconditioner for the following penalized HDG
discretization:
\begin{subequations}
  \label{eq:discrete_problem_ALg}
  \begin{align}
    \label{eq:discrete_problem_ALg_a}
    \mu a_h(\boldsymbol{u}_h, \boldsymbol{v}_h)
    + o_h(u_h; \boldsymbol{u}_h, \boldsymbol{v}_h)
    + \gamma d_h(u_h, v_h)
    + b_h(\boldsymbol{p}_h, v_h)
    &=
      (f, v_h)_{\mathcal{T}},
    \\
    \label{eq:discrete_problem_ALg_b}
    b_h(\boldsymbol{q}_h, u_h)
    &= 0.
  \end{align}
\end{subequations}
In theory, if we eliminate $u$ and $p$ for a reduced system defined
only on faces, we arrive at the following condensed preconditioner:
\begin{equation}
  \label{eq:condensed2form_up_pre}
  \overline{\mathbb{P}}_D
  =
  \begin{bmatrix}
    \bar{F} &
    0 \\
    -B_{\bar{p}u}\mathcal{P}(F_{uu}+\gamma D_{uu})^{-1}F_{u\bar{u}} &
    -\gamma^{-1}\bar{M} - B_{\bar{p}u}\mathcal{P}(F_{uu}+\gamma D_{uu})^{-1}B_{\bar{p}u}^T
  \end{bmatrix},
\end{equation}
where
\begin{equation*}
  \begin{split}
    \bar{F}
    &= -F_{\bar{u}u}\mathcal{P}(F_{uu}+\gamma D_{uu})^{-1}F_{u\bar{u}} + F_{\bar{u}\bar{u}},
    \\
    \mathcal{P}
    &= I - (F_{uu} + \gamma D_{uu})^{-1}B_{pu}^T(B_{pu}(F_{uu}+\gamma D_{uu})^{-1}B_{pu}^T)^{-1}B_{pu}.
  \end{split}
\end{equation*}

Above, we proposed a block preconditioner first and then statically
condense the $4\times4$ block system to a $2\times 2$ block system
over faces. Alternatively, we can first condense to faces and then
define our preconditioner. Recall the discussion from the beginning of
this section regarding convergence of fixed-point and Krylov methods
being fully defined by the approximation to the Schur complement. Note
that the Schur complement in $\bar{p}$ is identical regardless of
whether it is obtained by eliminating the full $\{u,\bar{u},p\}$
block, or first by condensing to a reduced equation in
$\{\bar{u},\bar{p}\}$ and then forming a Schur complement in
$\bar{p}$. To that end, one can also take \cref{cor:2x2} and
\cref{lem:factDuu} together to imply $ -\gamma^{-1}\bar{M}$ is a good
approximation to the Schur complement in $\bar{p}$ and apply this to
the reduced system, resulting in preconditioner
\begin{equation}
  \label{eq:condensed2form_up_pre-onlyM}
  \overline{\mathbb{P}}_{D,M}
  =
  \begin{bmatrix}
    \bar{F} & 0
    \\
    -B_{\bar{p}u}\mathcal{P}(F_{uu}+\gamma D_{uu})^{-1}F_{u\bar{u}} &
    -\gamma^{-1}\bar{M}
  \end{bmatrix}.
\end{equation}
Note the only difference with $\overline{\mathbb{P}}_D$ in
\cref{eq:condensed2form_up_pre} is that here we only include the mass
matrix in the approximate Schur complement, whereas in
\cref{eq:condensed2form_up_pre} we also include the condensed
$\bar{p}\bar{p}$-block of the system as well.

%------------------------------------------------------------------------------
\subsubsection{Preconditioner for static condensation}
\label{ss:preconditioningHDG:hdg}

Unfortunately, it is expensive to apply static condensation to the HDG
method \cref{eq:discrete_problem_ALg} because $d_h(\cdot, \cdot)$
couples velocity cell degrees of freedom across multiple cells. As a
result, the inverse of $F_{uu}+\gamma D_{uu}$ is typically dense, and
we cannot directly construct the reduced system in
$\{\bar{u},\bar{p}\}$.  To that end, we seek a local augmentation that
maintains the ease of approximating $S$ in $\bar{p}$, but also
facilitates static condensation.  We motivate this by considering the
more general result on the inverse of singular perturbations,
\Cref{lem:singularMatrixPerturbation}. There we see that the key
assumption in choosing an augmentation matrix $J$ is that
$\mathcal{N}(J) \subseteq\mathcal{N}(B)$. In
\Cref{ss:preconditioningHDG:duu} we choose $d_h(u,v)$ \cref{eq:dhu}
and the assembled matrix $D_{uu}$ such that
$\mathcal{N}(B_{\bar{p}u}) = \mathcal{N}(D_{uu}) = V_h \cap
H(\text{div};\Omega)$ and so also $\mathcal{N}(B) = \mathcal{N}(J)$.

A hybridizable alternative to $d_h(u,v)$, chosen such that the
nullspace is a subspace of the nullspace of $d_h(u,v)$, is given by
\begin{equation}
  \label{eq:ghu}
  g_h(\boldsymbol{u}, \boldsymbol{v}) := \langle
  h_K^{-1}(u-\bar{u})\cdot n, (v-\bar{v})\cdot n
  \rangle_{\partial\mathcal{T}},
\end{equation}
which in matrix form is expressed as
\begin{equation*}
  g_h(\boldsymbol{v}_h, \boldsymbol{v}_h)
  = [v^T, \bar{v}^T] G
  \begin{bmatrix}
    v \\ \bar{v}
  \end{bmatrix}
  = [v^T, \bar{v}^T]
  \begin{bmatrix}
    G_{uu} & G_{\bar{u}u}^T \\
    G_{\bar{u}u} & G_{\bar{u}\bar{u}}
  \end{bmatrix}
  \begin{bmatrix}
    v \\ \bar{v}
  \end{bmatrix}
  \qquad \forall {\bf v} \in \mathbb{V}.
\end{equation*}
Let
$R_k(\partial K) := \cbr[0]{\mu \in L^2(\partial K)\,:\, \mu|_F \in
  P_k(F),\ \forall F \subset \partial K}$ and let
$b_2^K(\bar{q}_h, \boldsymbol{u}_h) = \langle (u_h - \bar{u}_h) \cdot
n, \bar{q}_h \rangle_{\partial K}$. Then the nullspace of $G$ is given
by
\begin{equation}
  \mathcal{N}(G) :=
  \cbr[0]{ \boldsymbol{v}_h \in \boldsymbol{V}_h \, :\,
    b_2^K(\bar{q}_h, \boldsymbol{u}_h) = 0\ \forall \bar{q}_h \in R_k(\partial K),\ \forall K \in \mathcal{T}_h }.
\end{equation}
Note that
\begin{equation}
  \label{eq:globalHdivenforce}
  \begin{split}
    b_2(\bar{q}_h, u_h) &= \sum_{K \in \mathcal{T}_h} \langle
    u_h \cdot n, \bar{q}_h \rangle_{\partial K} + \langle u_h\cdot
    n, \bar{q}_h \rangle_{\partial \Omega}
    \\
    &= \sum_{K \in \mathcal{T}_h} \langle (u_h - \bar{u}_h) \cdot n,
    \bar{q}_h \rangle_{\partial K} = \sum_{K \in \mathcal{T}_h}
    b_2^K(\bar{q}_h, \boldsymbol{u}_h),
  \end{split}
\end{equation}
where the second equality is because $\bar{u}_h \cdot n$ and
$\bar{q}_h$ are single-valued on interior faces and $\bar{u}_h = 0$ on
the boundary. This is used to show the following relationship between
$\mathcal{N}(G)$ and $\mathcal{N}(D_{uu})$:
\begin{equation}
  \label{eq:NGvsND}
  \begin{split}
    \mathcal{N}(G)
    &\subset
    \cbr[2]{\boldsymbol{v}_h \in \boldsymbol{V}_h\, :\, \sum_{K \in \mathcal{T}_h}b_2^K(\bar{q}_h, v_h) = 0 \ \forall \bar{q}_h \in \bar{Q}_h}
    \\
    &=
    \cbr[0]{\boldsymbol{v}_h \in \boldsymbol{V}_h\, :\, b_2(\bar{q}_h, v_h) = 0 \ \forall \bar{q}_h \in \bar{Q}_h}
    \\
    &= \cbr[0]{\boldsymbol{v}_h \in \boldsymbol{V}_h\, :\, v_h \in V_h \cap H(\text{div},\Omega)}
    \\
    &= \sbr[1]{V_h \cap H(\text{div},\Omega)} \times \bar{V}_h
    \\
    &= \mathcal{N}(D_{uu}) \times \bar{V}_h.
  \end{split}
\end{equation}
In other words, the nullspace of $G$ is smaller than that of $D_{uu}$
as $\boldsymbol{v}_h \in \mathcal{N}(G)$ requires
$v_h \cdot n = \bar{v}_h \cdot n$ on all faces in addition to
$v_h \in V_h \cap H(\text{div};\Omega)$.

By nature of $\mathcal{N}(G)\subset \mathcal{N}(D_{uu})$,
\Cref{lem:singularMatrixPerturbation} still naturally applies.  In
\Cref{cor:2x2} and \cref{ss:preconditioningHDG:duu} we are able to
express the leading order term in $\mathcal{S}$ specifically as a mass
matrix. Here we no longer have the analytical representation of the
leading term in $\gamma$, but posit that the mass matrix remains a
good approximation to the leading order term in $\gamma$, $BE_QB^T$, a
hypothesis confirmed in experiments in \cref{sec:num_examples}.
Replacing the augmentation $D_{uu}$ in \cref{eq:precon_global} with
the matrix form of $g_h(\cdot, \cdot)$, we find the following block
preconditioner
\begin{equation}
  \label{eq:precon_sc}
  \mathbb{P}_G =
  \begin{bmatrix}
    F_{uu} + \gamma G_{uu} & B_{pu}^T & F_{u\bar{u}} + \gamma G_{\bar{u}u}^T & 0 \\
    B_{pu} & 0 & 0 & 0 \\
    F_{\bar{u}u} + \gamma G_{\bar{u}u} & 0 & F_{\bar{u}\bar{u}} + \gamma G_{\bar{u}\bar{u}} & 0 \\
    B_{\bar{p}u} & 0 & 0 & -\gamma^{-1} \bar{M}
  \end{bmatrix},
\end{equation}
as an augmented block preconditioner for the following penalized HDG
discretization:

\begin{subequations}
  \label{eq:discrete_problem_sc}
  \begin{align}
    \label{eq:discrete_problem_sc_a}
    \mu a_h(\boldsymbol{u}_h, \boldsymbol{v}_h)
    + o_h(u_h; \boldsymbol{u}_h, \boldsymbol{v}_h)
    + \gamma g_h(\boldsymbol{u}_h, \boldsymbol{v}_h)
    + b_h(\boldsymbol{p}_h, v_h)
    &=
      (f, v)_{\mathcal{T}},
    \\
    \label{eq:discrete_problem_sc_b}
    b_h(\boldsymbol{q}_h, u_h)
    &= 0.
  \end{align}
\end{subequations}
Since $F_{uu}$, $B_{pu}$, and $G_{uu}$ are block diagonal with one
block per cell, we can directly eliminate $u$ and $p$ from
\cref{eq:discrete_problem_sc} for a reduced system defined only on
faces. The corresponding condensed preconditioner is given by
\begin{equation}
  \label{eq:condensed2form_up_pre-J}
  \overline{\mathbb{P}}_G
  =
  \begin{bmatrix}
    \bar{F}^g &
    0 \\
    -B_{\bar{p}u}\mathcal{P}^g(F_{uu}+\gamma G_{uu})^{-1}F_{u\bar{u}}^g &
    -\gamma^{-1}\bar{M} - B_{\bar{p}u}\mathcal{P}^g(F_{uu}+\gamma G_{uu})^{-1}B_{\bar{p}u}^T
  \end{bmatrix},
\end{equation}
where
\begin{equation*}
  \begin{split}
    \bar{F}^g
    &= -F_{\bar{u}u}^g\mathcal{P}^g(F_{uu}+\gamma G_{uu})^{-1}F_{u\bar{u}}^g + F_{\bar{u}\bar{u}}^g,
    \\
    \mathcal{P}^g
    &= I - (F_{uu} + \gamma G_{uu})^{-1}B_{pu}^T(B_{pu}(F_{uu}+\gamma G_{uu})^{-1}B_{pu}^T)^{-1}B_{pu},
  \end{split}
\end{equation*}
and $F_{\bar{u}u}^g = F_{\bar{u}u} + \gamma G_{\bar{u}u}$,
$F_{u\bar{u}}^g = F_{u\bar{u}} + \gamma G_{\bar{u}u}^T$,
$F_{\bar{u}\bar{u}}^g = F_{\bar{u}\bar{u}} + \gamma
G_{\bar{u}\bar{u}}$.

As discussed in \Cref{ss:preconditioningHDG:duu}, the Schur complement
in $\bar{p}$ is identical before and after static condensation. Thus
one could also first statically condense the augmented system, and
then note that the Schur complement in $\bar{p}$ is well approximated
by a mass-matrix, resulting in the similar preconditioner
\begin{equation}
  \label{eq:condensed2form_up_pre-J2}
  \overline{\mathbb{P}}_{G,M}
  =
  \begin{bmatrix}
    \bar{F}^g &
    0 \\
    -B_{\bar{p}u}\mathcal{P}^g(F_{uu}+\gamma G_{uu})^{-1}F_{u\bar{u}}^g &
    -\gamma^{-1}\bar{M}
  \end{bmatrix}.
\end{equation}

\begin{remark}
  \label{rem:dhvsgh}
  While the solution
  $(\boldsymbol{u}_h, \boldsymbol{p}_h) \in \boldsymbol{V}_h \times
  \boldsymbol{Q}_h$ to \cref{eq:discrete_problem} and
  \cref{eq:discrete_problem_ALg} are the same, the solution to
  \cref{eq:discrete_problem} and \cref{eq:discrete_problem_sc} are not
  due to \cref{eq:NGvsND}. However, $g_h$ is a consistent penalization
  term that does not affect the well-posedness and accuracy of the
  discretization; see Appendix \ref{ap:wellposedapriori}.
\end{remark}

%------------------------------------------------------------------------------
\subsection{Solving the augmented velocity block}
\label{sec:spack}

The preconditioners $\overline{\mathbb{P}}_G$ in
\cref{eq:condensed2form_up_pre-J} and $\overline{\mathbb{P}}_{G,M}$ in
\cref{eq:condensed2form_up_pre-J2} still require the (action of the)
inverse of $\bar{F}^g$ and either
$-\gamma^{-1}\bar{M} - B_{\bar{p}u}\mathcal{P}^g(F_{uu}+\gamma
G_{uu})^{-1}B_{\bar{p}u}^T$ or $-\gamma^{-1}\bar{M}$. The two trace
pressure Schur complement terms are relatively well-conditioned and
easy to solve. Conversely, by creating simple (and effective)
approximate Schur complements, we have transferred much of this
difficulty to solving the augmented velocity block.

Before static condensation, the augmented velocity block corresponds
to an advection dominated advection-diffusion equation with a large
symmetric singular perturbation. Each of these classes of problems,
advection-dominated problems and singular perturbations, are
independently quite challenging to solve in a fast and scalable
way. When applicable, multilevel methods are typically the most
efficient linear solvers for large sparse systems, particularly
arising from differential operators. Geometric multigrid (GMG) methods
applied to advection-dominated problems almost always require
semi-coarsening and/or line/plane relaxation to capture the advection
on coarse grids. Such techniques require structured grids and provide
poor parallel scaling, as the line/plane relaxation typically spans
many processes. The only exception we are aware of is the
patch-relaxation multigrid methods in \cite{Farrell:2019,Farrell:2021}
that prove robust for certain discretizations of high-Reynolds number
incompressible flow problems, using a local patch-based relaxation
rather than full line/plane relaxation. Why these methods are
effective on high-Reynolds number incompressible problems remains an
open question. However, it should be pointed out that to the best of
our knowledge the preconditioner and patch-based GMG in
\cite{Farrell:2019,Farrell:2021} are the only fast solvers in the
literature that have demonstrated robust performance in mesh spacing
and Reynolds number, and are naturally parallelizable.

The extension of GMG methods to HDG discretizations is nontrivial
\cite{Cockburn:2014a,Fu:2024,Lu:2022,Muralikrishnan:2020}. We are also
interested in algebraic approaches for practical reasons, where
large-scale codes, particularly of the multiphysics type, do not
always maintain a full mesh hierarchy as needed for GMG.  Recent work
has developed fast AIR-algebraic MG (AMG) methods for certain
advection-dominated problems \cite{Manteuffel:2018,Manteuffel:2019},
including HDG advection-diffusion systems \cite{Sivas:2021}. However,
these methods rely heavily on the matrix structure arising from an
advection-dominated problem, and a large symmetric singular
perturbation ruins this structure and the corresponding convergence of
AIR-AMG. A recent paper on preconditioning singular perturbations
$A + \gamma UU^T$ \cite{Benzi:2024} offered one route to combine
AIR-AMG with solvers directly for the singular perturbation, e.g.\
\cite{Dobrev:2019,Lee:2017}.  However, we have not found the method
proposed in \cite{Benzi:2024} to be robust for our
problems. Specifically, the performance of the preconditioner proposed
in \cite{Benzi:2024} appears to be highly sensitive to the linear
system right-hand side as illustrated in \cref{tab:benzi}.

\begin{table}
  \caption{Representative iteration counts for application of the
    preconditioner proposed in \cite{Benzi:2024} to the velocity
    block of a Q2-Q1 Taylor--Hood discretization of a lid-driven
    cavity. Here we use a block lower triangular preconditioner with
    ``exact'' Schur complement up to numerical precision (see
    Appendix \ref{app:PETSc} for details on PETSc KSP Schur complement
    preconditioning). Each KSP preconditioner application requires
    two velocity block solves, one that is directly part of the lower triangular
    KSP preconditioner ($A$ in \eqref{eq:triS}), and one inner solve
    required to compute the action of the Schur complement (see
    $S$ in \eqref{eq:S}) when computing a residual in the KSP.
    For each simulation case, two numbers are reported unless
    GMRES did not converge (DNC). The first is the number of iterations required
    for the velocity block solve in the lower triangular KSP
    preconditioner. The second is the number of iterations required
    for the inner KSP velocity block solves to compute the action of
    the Schur complement. The linear solver relative tolerance is $10^{-2}$
    for each KSP with restart after 300 Krylov basis vectors. When
    applied to the inner KSP velocity solves, the
    preconditioner performs exceptionally well, but when applied to
    the lower KSP velocity solves, the preconditioner
    performs poorly except for very small problems. Moreover, the velocity
    block solves in the lower triangular KSP are arguably the most
    important, as these are fundamental to any block preconditioner,
    even when using an inexact Schur complement as in \eqref{eq:triS}
    and used in numerical results in \cref{sec:num_examples}. }
  {\fontsize{9}{10.5}\selectfont
    \begin{center}
      \begin{tabular}{cc|cccccc}
        \hline
        Cells & DOFs & $\text{Re}=10$ & $\text{Re}=10^2$ & $\text{Re}=5\cdot 10^2$ \\
        \hline
        \num{16}   & \num{187}   & 16,2   & 18,2 & 24,2 \\
        \num{64}   & \num{659}   & 58,2   & DNC,2 & DNC,2 \\
        \num{256}  & \num{2467}  & 177,2  & DNC,2 & DNC,2 \\
        \hline
      \end{tabular}
    \label{tab:benzi}
  \end{center}
}
\end{table}

Post static condensation, the equation is arguably more complicated
because the two distinct components are intertwined in complex
ways. This makes it challenging to take advantage of certain known
structure such as the symmetric singular perturbation with known
kernel.  Other algebraic preconditioners often used in a black-box
manner are approximate sparse inverse methods, such as incomplete LU
(ILU). Unfortunately, our experience has been that ILU is typically
not effective on advection-dominated problems. For example, on the 2D
lid-driven cavity problem we consider in \Cref{sec:num_examples},
ILU(10) converges well for a Reynolds number of 1, but at a Reynolds
number of 10, GMRES with no restart preconditioned by ILU(10) does not
converge in 300 iterations, and performance further degrades with
increasing Reynolds number.

Recently, a new class of methods based on multifrontal sparse LU
solvers using butterfly compression have been developed in the
STRUMPACK library, targeting wave equations and sparse systems where
the frontal matrices are effectively high rank and not amenable to the
low rank approximations used for elliptic problems
\cite{Claus:2023,Liu:2021}. These methods are in principle direct
solvers, with inner approximations and memory compression techniques
to accelerate performance and reduce memory requirements. Between
needing an algebraic solver that is effectively black box and solving
low regularity problems with high Reynolds numbers, the methods
developed in \cite{Claus:2023,Liu:2021} are well-suited to the
challenges arising in augmented high Reynolds number flow. Thus, to
solve the statically condensed velocity field from the augmented
system, $\bar{F}^g$, we use GMRES with multifrontal inexact LU by
STRUMPACK. As demonstrated in \Cref{sec:num_examples}, the performance
is quite good and able to solve challenging high Reynolds number
problems efficiently.

%------------------------------------------------------------------------------
\section{Numerical examples}
\label{sec:num_examples}

We use Newton's method with continuation to solve
\cref{eq:discrete_problem_sc}. At each Newton step the linear problems
are statically condensed and the global problems are solved using
FGMRES preconditioned with block preconditioners
$\overline{\mathbb{P}}_G$ in \cref{eq:condensed2form_up_pre-J} or
$\overline{\mathbb{P}}_{G,M}$ in \cref{eq:condensed2form_up_pre-J2},
based on the block lower triangular preconditioner in \eqref{eq:triS}
with approximate Schur complement.'  The condensed velocity block in
the preconditioner is solved using GMRES preconditioned with
STRUMPACK. The Schur complement is solved using FGMRES with diagonal
Jacobi preconditioning. See \cref{fig:solverdiagram} for the solver
diagram. The numerical examples are implemented in MOOSE
\cite{harbour20254} with solver support from PETSc
\cite{petsc-user-ref,petsc-efficient} and STRUMPACK
\cite{Liu:2021,Claus:2023}.

All simulations use the penalty parameter $\alpha = 10 k^2$, with $k$
the degree of the polynomial approximation, and preconditioning
parameter $\gamma = 10^4$. For the numerical examples we will refer to
the Reynolds number which is defined as $\text{Re} := UL/\mu$ with $U$
the characteristic velocity and $L$ the characteristic length scale of
the flow.

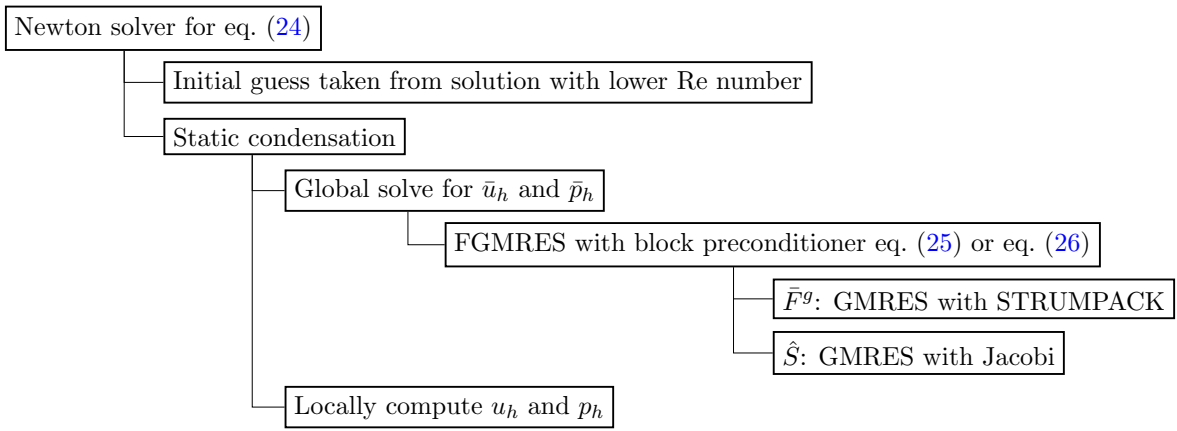
\begin{figure}[tbp]
  \centering
  \scalebox{0.9}{
    \begin{tikzpicture}[%
      every node/.style={draw=black, thick, anchor=west},
      grow via three points={one child at (0.0,-0.8) and
        two children at (0.0,-0.8) and (0.0,-1.6)},
      edge from parent path={(\tikzparentnode.210) |- (\tikzchildnode.west)}]
      \node {Newton solver for \cref{eq:discrete_problem_sc}}
      child {node {Initial guess taken from solution with lower Re number}}
      child {node {Static condensation}
        child {node {Global solve for $\bar{u}_h$ and $\bar{p}_h$}
          child {node {FGMRES with block preconditioner \cref{eq:condensed2form_up_pre-J} or \cref{eq:condensed2form_up_pre-J2}}
            child {node {$\bar{F}^g$: GMRES with STRUMPACK}}
            child {node {$\hat{S}$: GMRES with Jacobi}}
          }
        }
        child[missing]{}
        child[missing]{}
        child[missing]{}
        child {node {Locally compute $u_h$ and $p_h$}}
      };
    \end{tikzpicture}
  }
  \caption{Solver diagram.}
  \label{fig:solverdiagram}
\end{figure}

%------------------------------------------------------------------------------
\subsection{Test cases}
\label{ss:tests}

We consider the lid-driven cavity and backward-facing step
problems. For the lid-driven cavity problem we set
$\Omega = (0, 1)^2$, impose $u = (1, 0)$ on the boundary $x_2=1$ and
set $u=0$ on the remaining boundaries. For the backward-facing step we
consider
$\Omega = \del[1]{ \sbr[0]{0,10} \times \sbr[0]{0,2} } \backslash
\del[1]{\sbr[0]{0,1} \times \sbr[0]{0,1}}$ and impose
$u=(4(2-x_2)(x_2-1),0)$ on the boundary $x_1=0$, outflow boundary at
$x_1=10$, and $u=0$ elsewhere. For both test cases, the characteristic
velocity and characteristic length are unity so that
$\text{Re} = 1/\mu$. The meshes for the lid-driven cavity problem are
regular triangular meshes while the meshes for the backward-facing
step problem have been taken from \cite{FiredrakeZenodo}.

For both test cases we follow \cite{Farrell:2021} and consider
continuation in the Reynolds number, using the previous lower Reynolds
number as initial guess for Newton's method for the current Reynolds
number. For the lid-driven cavity problem we solve for $\text{Re}=1$,
$\text{Re}=10$, $\text{Re}=100$ and then in steps of $250$ until
$\text{Re}=\num{10000}$. For the backward-facing step we solve for
$\text{Re}=1$, $\text{Re}=10$, $\text{Re}=50$, $\text{Re}=100$,
$\text{Re}=150$, $\text{Re}=200$, $\text{Re}=250$, $\text{Re}=300$,
$\text{Re}=350$, $\text{Re}=400$ and then in steps of $200$ until
$\text{Re}=\num{10000}$. The initial guess for $\text{Re}=1$ is the
zero vector. The absolute and relative tolerances for the nonlinear
solver are $10^{-7}$ and $10^{-8}$ respectively. The outer linear
solver relative tolerance is $10^{-4}$ while the inner linear solver
relative tolerances for the condensed velocity block and condensed
Schur complement are both $10^{-2}$. The outer and inner linear solver
absolute tolerances are $10^{-9}$ and $10^{-8}$
respectively. Convergence for both outer and inner linear solvers is
determined based on the unpreconditioned residual norm. Plots of the
solutions at $\text{Re} = \num{10000}$ for the lid-driven cavity and
backward-facing step are given in \cref{fig:graphical-results}.

\begin{figure}[htbp]
  \centering
  \subfloat[Lid-driven cavity.]{\includegraphics[width=0.45\textwidth]{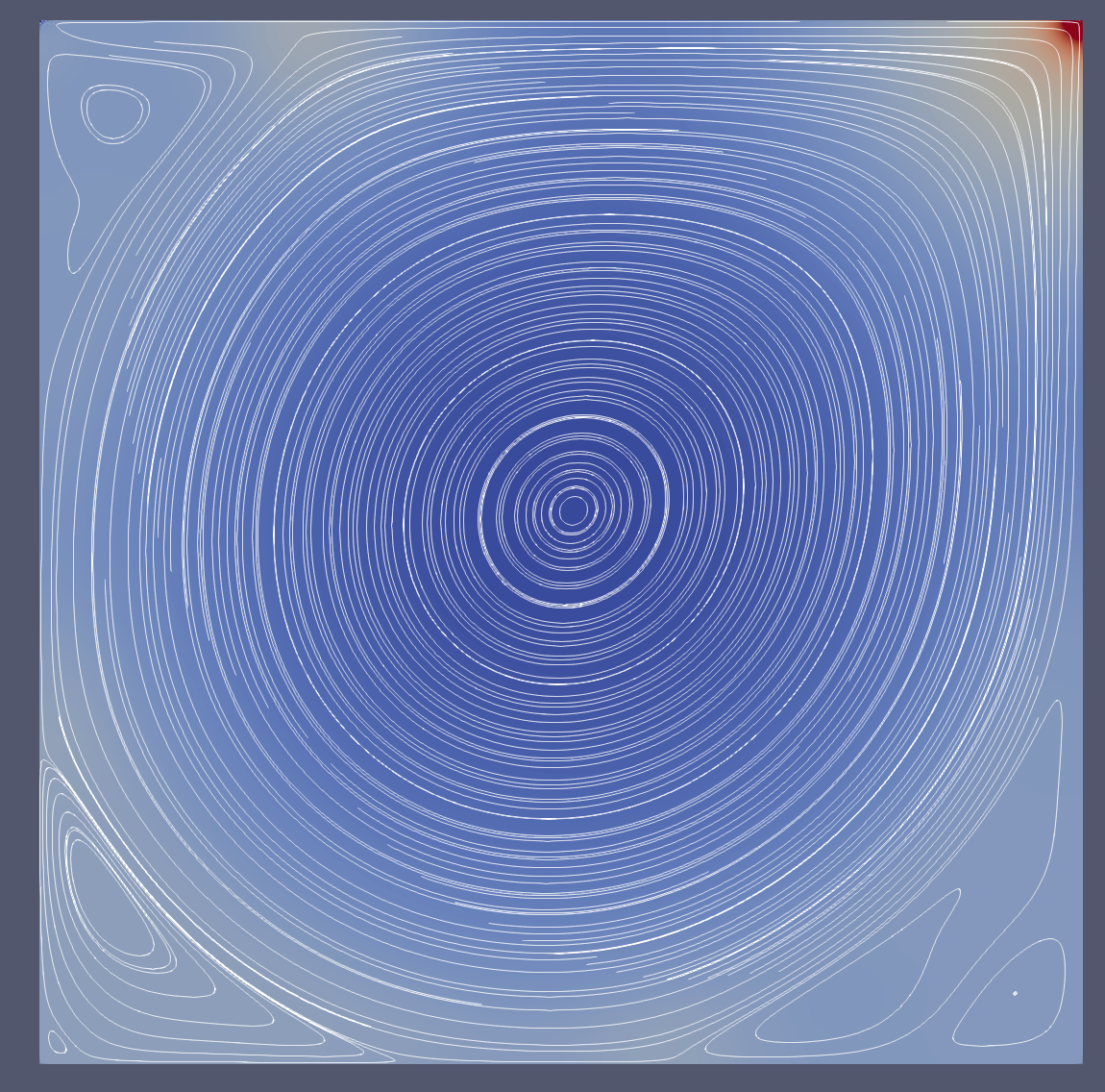}}
  \\
  \subfloat[Backward-facing step.]{\includegraphics[width=0.9\textwidth]{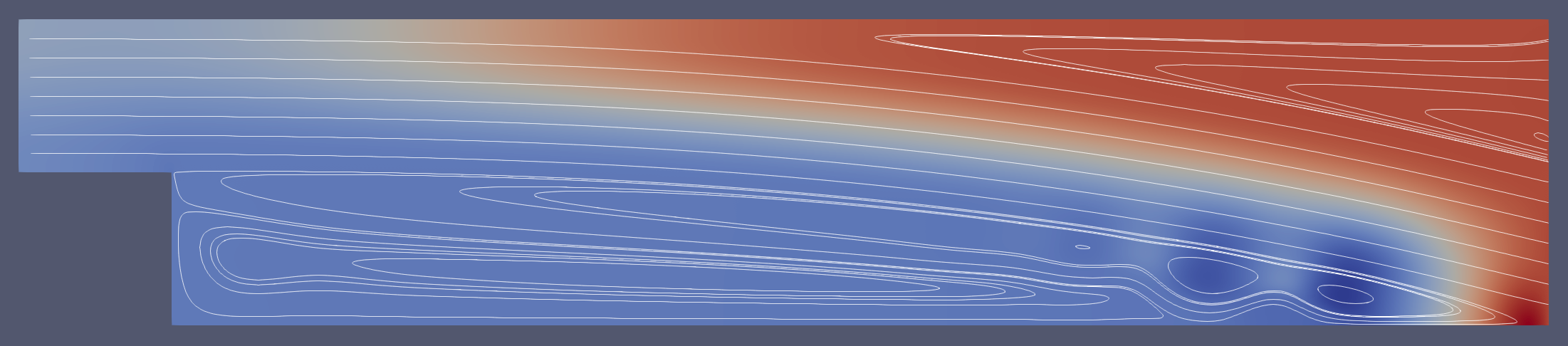}}
  \caption{Pressure colormaps plus streamline plots of the lid-driven cavity and
    backward-facing step problems with $\text{Re} = \num{10000}$}
  \label{fig:graphical-results}
\end{figure}

%------------------------------------------------------------------------------
\subsection{Results}
\label{ss:results}

In \cref{tab:2d_a11_and_mass_hdg} and \cref{tab:2d_mass_hdg} we
present the number of iterations needed to converge for various
Reynolds numbers when $k=2$ and when using the HDG discretization with
\cref{eq:condensed2form_up_pre-J} and
\cref{eq:condensed2form_up_pre-J2} respectively. We report three
numbers which describe the solve (see \cref{fig:solverdiagram}). The
first is the number of Newton iterations. The second is the maximum
number of outer FGMRES iterations observed over the Newton solve which
is an implicit measure of the quality of the Schur complement
approximation. The final number is the maximum number of GMRES
iterations to solve $\bar{F}^g$. We also present the average Newton
solve time, averaged over all Reynolds numbers, divided by the number
of degrees of freedom per process. We observe that the Schur
complement approximation, as measured by the second iteration count,
is robust with respect to both number of degrees of freedom and
Reynolds number. This FGMRES iteration count is significantly lower
for the lid driven case and slightly lower for the backwards facing
step case when using \cref{eq:condensed2form_up_pre-J2} compared to
\cref{eq:condensed2form_up_pre-J}.  We observe that the multifrontal
inexact LU used to solve $\bar{F}^g$, as measured by iteration counts,
is robust with respect to both number of degrees of freedom and the
Reynolds number up to a critical case-dependent problem size. We do
not reach that size for the backward facing step cases considered for
the paper, but we do for the lid driven problem.  For the lid driven
case with $\num{32768}$ cells we observe the number of iterations
required to approximately solve $\bar{F}^g$ increase substantially
(5x) when moving from diffusion to advection dominated regimes. This
notably affects the weak scaling of the solver with the average solve
time increasing by 60\% between $\num{8192}$ cell and $\num{32768}$
cell cases whereas the average solve time only increased by 5\% when
using \cref{eq:condensed2form_up_pre-J} and 30\% when using
\cref{eq:condensed2form_up_pre-J2} at the previous refinement
level. When performing the first refinement for the backwards facing
step we observe the desired weak scaling: constant average solve time
(actually a 10\% decrease). However, at the next refinement level we
note a $\sim 40\%$ increase in average solve time even with a constant
number of iterations required to approximately solve $\bar{F}^g$.  We
attribute this to the non-linear complexity of LU factorization.

\begin{table}
  \caption{Solver results for two dimensional test cases with HDG discretization using the preconditioner \cref{eq:condensed2form_up_pre-J}. The first column is the number of processes.}
  {\fontsize{9.0}{10.5}\selectfont
    \begin{center}
      \begin{tabular}{cccc|cccccccc}
        \hline
        p & Cells & Total dofs & Condensed dofs & $\text{Re}=1$ & $10$ & $10^2$ & $10^3$ & $10^4$ & time/100k local dofs\\
        \hline
        \multicolumn{10}{c}{Lid-driven cavity problem} \\
        \hline
          1 & \num{2048}  & \num{58944} & \num{28224} & 2,10,1 & 2,14,1 & 3,10,1 & 3,10,1 & 2,10,1 & 17.3 \\
          4 & \num{8192}  & \num{234624} & \num{111744} & 2,10,1 & 2,14,1 & 3,10,1 & 3,11,1 & 2,10,1 & 18.2 \\
          16 & \num{32768} & \num{936192} & \num{444672} & 2,10,2 & 2,13,1 & 3,10,11 & 3,10,11 & 1,10,7 & 28.9 \\
        \hline
        \multicolumn{10}{c}{Backward-facing step problem} \\
        \hline
          1 & \num{3203} & \num{92199} & \num{44154} & 2,10,1 & 2,11,1 & 3,11,1 & 3,11,1 & 2,11,1 & 21.5 \\
          4 & \num{12812} & \num{366969} & \num{174789} & 2,10,1 & 2,11,1 & 3,11,1 & 3,11,1 & 2,11,1 & 19.5 \\
          16 & \num{51284} & \num{1464222} & \num{695502} & 2,10,1 & 2,11,1 & 3,12,1 & 3,12,1 & 2,11,1 & 28.0 \\
        \hline
      \end{tabular}
    \label{tab:2d_a11_and_mass_hdg}
  \end{center}
}
\end{table}

\begin{table}
  \caption{Solver results for two dimensional test cases with HDG discretization using the preconditioner \cref{eq:condensed2form_up_pre-J2}. The first column is the number of processes.}
  {\fontsize{9}{10.5}\selectfont
    \begin{center}
      \begin{tabular}{cccc|cccccccc}
        \hline
        p & Cells & Total dofs & Condensed dofs & $\text{Re}=1$ & $10$ & $10^2$ & $10^3$ & $10^4$ & time/100k local dofs\\
        \hline
        \multicolumn{10}{c}{Lid-driven cavity problem} \\
        \hline
          1 & \num{2048}  & \num{58944} & \num{28224} & 2,6,1 & 2,6,1 & 3,6,1 & 3,6,1 & 2,6,1 & 13.2 \\
          4 & \num{8192}  & \num{234624} & \num{111744} & 2,6,1 & 2,6,1 & 3,5,1 & 3,6,1 & 2,5,1 & 17.4 \\
          16 & \num{32768} & \num{936192} & \num{444672} & 2,6,2 & 2,7,3 & 3,7,11 & 3,8,11 & 1,7,9 & 27.3 \\
        \hline
        \multicolumn{10}{c}{Backward-facing step problem} \\
        \hline
          1 & \num{3203} & \num{92199} & \num{44154} & 2,9,1 & 2,10,1 & 3,10,1 & 3,10,1 & 2,10,1 & 21.3 \\
          4 & \num{12812} & \num{366969} & \num{174789} & 2,9,1 & 2,10,1 & 3,10,1 & 3,10,1 & 2,10,1 & 19.2 \\
          16 & \num{51248} & \num{1464222} & \num{695502} & 2,9,1 & 2,11,1 & 3,11,1 & 3,10,1 & 2,10,1 & 27.4 \\
        \hline
      \end{tabular}
    \label{tab:2d_mass_hdg}
  \end{center}
}
\end{table}

The preconditioners in this manuscript are introduced for the HDG
discretization of the Navier--Stokes equations. However, they can also
be applied to an embedded-hybridizable discontinuous Galerkin
(EDG-HDG) \cite{Rhebergen:2020} discretization in which $\bar{V}_h$ in
\cref{eq:femVbh} is replaced by $\bar{V}_h \cap C^0(\Gamma^0)$. In
\cref{tab:2d_a11_and_mass_edghdg} we present results using the EDG-HDG
discretization combined with the preconditioner
\cref{eq:condensed2form_up_pre-J2}. The results are qualitatively
similar to those for HDG. For the lid driven cavity case we note that
the number of strumpack iterations required to solve $\bar{F}^g$ for
the most refined case is lower due to the reduced number of degrees of
freedom present in the EDG-HDG system and consequently lower
compression loss. We note a 100\% increase in the average solve time
when moving to the most refined backwards facing step case. We
attribute this to the increase in number of iterations required to
solve $\bar{F}^g$ as well as the non-linear complexity of LU
factorization.

\begin{table}
  \caption{Solver results for two dimensional test cases with EDG-HDG discretization using the preconditioner \cref{eq:condensed2form_up_pre-J2}. The first column is the number of processes.}
  {\fontsize{9}{10.5}\selectfont
    \begin{center}
      \begin{tabular}{cccc|cccccccc}
        \hline
        p & Cells & Total dofs & Condensed dofs & $\text{Re}=1$ & $10$ & $10^2$ & $10^3$ & $10^4$ & time/100k local dofs\\
        \hline
        \multicolumn{10}{c}{Lid-driven cavity problem} \\
        \hline
          1 & \num{2048} & \num{48578} & \num{17858} & 2,6,1 & 2,6,1 & 3,6,1 & 3,6,1 & 2,6,1 & 16.7 \\
          4 & \num{8192} & \num{193410} & \num{70530} & 2,6,1 & 2,5,1 & 3,4,1 & 3,6,1 & 2,5,1 & 14.3 \\
          16 & \num{32768} & \num{771842} & \num{280322} & 2,6,1 & 2,5,1 & 3,5,2 & 3,7,2 & 1,7,3 & 18.2 \\
        \hline
        \multicolumn{10}{c}{Backward-facing step problem} \\
        \hline
          1 & \num{12812} & \num{302505} & \num{110325} & 2,9,1 & 2,10,1 & 3,10,1 & 3,10,1 & 2,10,1 & 12.6 \\
          4 & \num{51284} & \num{1207172} & \num{438452} & 2,9,1 & 2,10,1 & 3,11,1 & 3,10,1 & 2,10,1 & 15.6 \\
          16 & \num{204992} & \num{4822998} & \num{1748118} & 2,8,2 & 2,10,3 & 3,11,5 & 2,11,10 & 2,10,18 & 30.5 \\
        \hline
      \end{tabular}
    \label{tab:2d_a11_and_mass_edghdg}
  \end{center}
}
\end{table}

%------------------------------------------------------------------------------
\subsection{Further discussion}
\label{ss:discussion}

As stated above, the preconditioners developed in this work are robust
until reaching a critical threshold of problem size and Reynolds
number.  We believe this is an important observation; running the
backwards facing step with a Reynolds number of $6\times 10^5$ using a
$k-\epsilon$ turbulence model yields an effective Reynolds number
(when including the effect of the turbulent viscosity) of
$9\times 10^2$
\cite{mohammadi1993analysis,lindsay2023moose}. Moreover, wall
functions in Reynolds-Averaged Navier--Stokes (RANS) models like
$k-\epsilon$ generally constrain the minimum element size near
boundaries such that the overall problem size is not large
\cite{launder1983numerical,versteeg2007introduction,patankar2018numerical};
we observe strong performance for a large range of Reynolds numbers
for moderate problem sizes. Large Eddy Simulation (LES) models
\cite{smagorinsky1963general,deardorff1970numerical,sagaut2006large},
another possible target area for HDG, have much higher effective
Reynolds numbers than RANS, but in this case simulations are transient
with sufficiently small timesteps such that explicit time integration
can be used or such that the implicit system is significantly easier
to solve due to increasing diagonal dominance. In summary we expect
the algebraic preconditioner proposed here to function effectively for
realistic CFD simulation conditions.

%------------------------------------------------------------------------------
\section{Conclusions}
\label{sec:conclusions}

We introduce an AL-like block preconditioner for a hybridizable
discontinuous Galerkin discretization of the linearized Navier--Stokes
equations at high Reynolds number. New linear algebra theory related
to AL preconditioning is used to motivate the new preconditioners, and
sparse direct solvers from STRUMPACK are used to solve the inner
blocks of the block coupled system. The nonlinearities are resolved
using Newton iterations with continuation in Reynolds number, and
results are demonstrated on the lid-driven cavity and backward facing
step problems. The block preconditioner is shown to be highly robust
and effective, and STRUMPACK solvers are demonstrated to be moderately
robust, most importantly on realistic Reynolds numbers that would
arise in CFD simulations with implicit time stepping or where steady
state solutions make sense.

%------------------------------------------------------------------------------
\section*{Acknowledgments}

BSS was supported by the DOE Office of Advanced Scientific Computing
Research Applied Mathematics program through Contract
No. 89233218CNA000001. Los Alamos National Laboratory Report
LA-UR-25-31645. SR was supported by the Natural Sciences and
Engineering Research Council of Canada through the Discovery Grant
program (RGPIN-2023-03237). ADL was supported by Lab Directed Research
and Development program funding at Idaho National Laboratory under
contract DE-AC07-05ID14517.

The authors thank Abdullah Ali Sivas for discussions on solvers for
HDG methods. The authors also thank Pierre Jolivet for extensive
discussion and experimentation on preconditioning for the augmented
and condensed velocity block.

%------------------------------------------------------------------------------
\bibliographystyle{plain}
\bibliography{references}
%------------------------------------------------------------------------------
\appendix
%------------------------------------------------------------------------------
\section{Proof of \Cref{lem:factDuu}}
\label{app:proofs}

Consider the following two problems: Find $u_h^{\gamma} \in V_h$ such
that
\begin{equation}
  \label{eq:problem1D}
  (u_h^{\gamma}, v_h)_{\mathcal{T}}
  + \gamma \langle h_F^{-1}\jump{ u_h^{\gamma} \cdot n}, \jump{v_h \cdot n} \rangle_{\mathcal{F}}
  = (g, v_h)_{\mathcal{T}}
  \quad \forall v_h \in V_h,
\end{equation}
and: Find $(u_h, \bar{p}_h) \in V_h \times \bar{Q}_h$ such that
\begin{subequations}
  \label{eq:problem2D}
  \begin{align}
    \label{eq:problem2D-a}
    (u_h, v_h)_{\mathcal{T}} + \langle \bar{p}_h, \jump{v_h \cdot n} \rangle_{\mathcal{F}}
    &= (g, v_h)_{\mathcal{T}} && \forall v_h \in V_h,
    \\
    \label{eq:problem2D-b}
    \langle \jump{u_h \cdot n}, \bar{q}_h \rangle_{\mathcal{F}}
    - \gamma^{-1} \langle h_F \bar{p}_h, \bar{q}_h \rangle_{\mathcal{F}}
    &= 0 && \forall \bar{q}_h \in \bar{Q}_h.
  \end{align}
\end{subequations}
\textbf{Step 1.} We will start by proving that \cref{eq:problem1D} and
\cref{eq:problem2D} are equivalent in the sense that if
$(u_h,\bar{p}_h) \in V_h \times \bar{Q}_h$ solves \cref{eq:problem2D}
and $u_h^{\gamma} \in V_h$ solves \cref{eq:problem1D}, then
$u_h = u_h^{\gamma}$ and
$\bar{p}_h = \gamma h_F^{-1} \jump{u_h^{\gamma} \cdot n}$. Subtract
\cref{eq:problem2D-a} from \cref{eq:problem1D}:
\begin{equation*}
  (u_h^{\gamma} - u_h, v_h)_{\mathcal{T}}
  + \gamma \langle h_F^{-1} \jump{u_h^{\gamma} \cdot n}, \jump{v_h \cdot n} \rangle_{\mathcal{F}}
  = \langle \bar{p}_h, \jump{v_h \cdot n} \rangle_{\mathcal{F}}
  \quad \forall v_h \in V_h.
\end{equation*}
Choose $v_h = u_h^{\gamma} - u_h$, then
\begin{equation}
  \label{eq:problem1bmin2avhuhexpD}
  \norm[0]{u_h^{\gamma} - u_h}_{\Omega}^2
  + \gamma \langle h_F^{-1} \jump{u_h^{\gamma} \cdot n}, \jump{u_h^{\gamma} \cdot n} \rangle_{\mathcal{F}}
  - \gamma \langle h_F^{-1} \jump{u_h^{\gamma} \cdot n}, \jump{u_h \cdot n} \rangle_{\mathcal{F}}
  = \langle \bar{p}_h, \jump{u_h^{\gamma} \cdot n} \rangle_{\mathcal{F}}
  - \langle \bar{p}_h, \jump{u_h \cdot n} \rangle_{\mathcal{F}}.
\end{equation}
By choosing $\bar{q}_h = h_F^{-1}\jump{u_h^{\gamma} \cdot n}$ and
$\bar{q}_h = h_F^{-1}\jump{u_h \cdot n}$ in \cref{eq:problem2D-b} we
find, respectively,
\begin{equation*}
  \langle\bar{p}_h, \jump{u_h^{\gamma} \cdot n} \rangle_{\mathcal{F}}
  =
  \gamma \langle \jump{u_h \cdot n}, h_F^{-1}\jump{u_h^{\gamma} \cdot n} \rangle_{\mathcal{F}},
  \quad
  \langle\bar{p}_h, \jump{u_h \cdot n} \rangle_{\mathcal{F}}
  =
  \gamma \langle \jump{u_h \cdot n}, h_F^{-1} \jump{u_h \cdot n} \rangle_{\mathcal{F}}.
\end{equation*}
Using these expressions in \cref{eq:problem1bmin2avhuhexpD},
\begin{equation*}
  \norm[0]{u_h^{\gamma} - u_h}_{\Omega}^2
  + \gamma \langle h_F^{-1} \jump{u_h^{\gamma} \cdot n}, \jump{u_h^{\gamma} \cdot n} \rangle_{\mathcal{F}}
  - 2\gamma \langle h_F^{-1} \jump{u_h^{\gamma} \cdot n}, \jump{u_h \cdot n} \rangle_{\mathcal{F}}
  =
  - \gamma \langle h_F^{-1} \jump{u_h \cdot n}, \jump{u_h \cdot n} \rangle_{\mathcal{F}}.
\end{equation*}
We may write this as:
\begin{equation*}
  \norm[0]{u_h^{\gamma} - u_h}_{\Omega}^2
  + \gamma \norm[0]{ h_F^{-1} \jump{(u_h^{\gamma}-u_h) \cdot n} }_{\Gamma^0}^2
  - \gamma \langle h_F^{-1} \jump{u_h \cdot n}, \jump{u_h \cdot n} \rangle_{\mathcal{F}}
  = - \gamma \langle h_F^{-1} \jump{u_h \cdot n}, \jump{u_h \cdot n} \rangle_{\mathcal{F}},
\end{equation*}
in other words,
$\norm[0]{u_h^{\gamma} - u_h}_{\Omega}^2 + \gamma \norm[0]{h_F^{-1}
  \jump{(u_h^{\gamma}-u_h) \cdot n} }_{\Gamma^0}^2 = 0$ implying that
$u_h = u_h^{\gamma}$. To now show that
$\bar{p}_h = \gamma h_F^{-1} \jump{u_h^{\gamma} \cdot n}$, use
$u_h = u_h^{\gamma}$ in \cref{eq:problem2D-b} to find that
$\langle \gamma \jump{u_h^{\gamma} \cdot n} - h_F \bar{p}_h, \bar{q}_h
\rangle_{\mathcal{F}_h} = 0$ for all $\bar{q}_h \in \bar{Q}_h$.
Choosing
$\bar{q}_h = \gamma \jump{u_h^{\gamma} \cdot n} - h_F \bar{p}_h$ we
obtain
$\norm[0]{\gamma \jump{u_h^{\gamma} \cdot n} - h_F
  \bar{p}_h}_{\Gamma^0}^2 = 0$ implying that
$\bar{p}_h = \gamma h_F^{-1} \jump{u_h^{\gamma} \cdot n}$.

\textbf{Step 2.} With the equivalence between \cref{eq:problem1D} and
\cref{eq:problem2D} established, we write both problems in matrix
form. \Cref{eq:problem1D} in matrix form is given by:
\begin{equation}
  \label{eq:uonpartialKdiscD}
  (M_u + \gamma D_{uu})u = G,
\end{equation}
where $M_u$ is the mass matrix on the cell velocity space. Noting that
$\langle \bar{q}_h, \jump{v_h \cdot n} \rangle_{\mathcal{F}} = \langle
\bar{q}_h, v_h \cdot n \rangle_{\partial \mathcal{T}} = b_2(\bar{q}_h,
v_h)$, \cref{eq:problem2D} in matrix form is given by:
\begin{equation}
  \label{eq:uonpartialKmixeddiscD}
  \begin{bmatrix}
    M_u & B_{\bar{p}u}^T \\ B_{\bar{p}u} & - \gamma^{-1}\bar{M}
  \end{bmatrix}
  \begin{bmatrix}
    u \\ \bar{p}
  \end{bmatrix}
  =
  \begin{bmatrix}
    G \\ 0
  \end{bmatrix}.
\end{equation}
Eliminating $\bar{p}$ from \cref{eq:uonpartialKmixeddiscD}, we obtain
the following equation for $u$:
\begin{equation}
  \label{eq:elimbarpmixeddisconpartialKD}
  (M_u + \gamma B_{\bar{p}u}^T \bar{M}^{-1} B_{\bar{p}u}) u = G.
\end{equation}
The result follows by comparing \cref{eq:uonpartialKdiscD} and
\cref{eq:elimbarpmixeddisconpartialKD}.

%------------------------------------------------------------------------------
\section{Well-posedness and a priori error analysis}
\label{ap:wellposedapriori}

Here we present a well-posedness and a priori error analysis of
\cref{eq:discrete_problem_sc}. For the Navier--Stokes equations
\cref{eq:ns} with homogeneous Dirichlet boundary conditions and
constant viscosity $\mu$ we note that we can replace
$-\nabla\cdot(2\mu\varepsilon(u))$ by $-\mu\nabla^2u$. This
simplification is used in this section to analyze
\cref{eq:discrete_problem_sc}.

Well-posedness and an a priori error analysis of
\cref{eq:discrete_problem_sc} with $\gamma = 0$ was proven in
\cite{Kirk:2019}. Although $g_h(\cdot, \cdot)$ \cref{eq:ghu} is a
consistent penalty term, the nullspace of $g_h$ is a subspace of
$V_h \cap H(\text{div};\Omega)$ and so the solution to
\cref{eq:discrete_problem_sc} will depend on $\gamma$. To determine
the effect of $\gamma$, we generalize the results of \cite{Kirk:2019}
to $\gamma \ge 0$. We will require the following (semi-)norms on the
extended velocity space
$\boldsymbol{V}(h) := \boldsymbol{V}_h + (H_0^1(\Omega)^d\cap
H^2(\Omega)^d) \times H_0^{3/2}(\Gamma^0)^d$:
\begin{align*}
  \tnorm{ \boldsymbol{v} }_{v}^2
  &:= \sum_{K\in\mathcal{T}_h} \del[2]{\norm{\nabla v }^2_{K}
    + h_K^{-1} \norm{\bar{v} - v}^2_{\partial K}},
  &
  \tnorm{ \boldsymbol{v} }_{v'}^2
  &:= \tnorm{ \boldsymbol{v} }_{v}^2 + \sum_{K \in \mathcal{T}_h} h_K\norm[0]{\partial_nv}_{\partial K}^2,
  \\
  \tnorm{ \boldsymbol{v} }_{v,\gamma}^2
  &:= \tnorm{ \boldsymbol{v} }_{v}^2 + \gamma \mu^{-1} |\boldsymbol{v} |_{r}^2,
  &
  \tnorm{ \boldsymbol{v} }_{v',\gamma}^2
  &:= \tnorm{ \boldsymbol{v} }_{v'}^2 + \gamma \mu^{-1} |\boldsymbol{v} |_{r}^2.
\end{align*}
where
$\envert[0]{ \boldsymbol{v} }_{r}^2 := \sum_{K \in \mathcal{T}_h}
h_K^{-1}\norm[0]{(v-\bar{v})\cdot n}_{\partial K}^2$. For
$v \in V(h) := V_h + H_0^1(\Omega)^d \cap H^2(\Omega)^d$ we set
$\norm[0]{v}_{1,h} := \tnorm{(v, \av{v})}_v$ and note that
\begin{equation}
  \label{eq:1hvs1ht}
  \tnorm{(v,\av{v})}_{v,\gamma} = \tnorm{(v,\av{v})}_{v}
  \qquad \forall v \in V(h) \cap H(\text{div};\Omega).
\end{equation}
Furthermore, by \cite[Proposition A.2]{Cesmelioglu:2017},
\begin{equation}
  \label{eq:Poincare}
  \norm[0]{v}_{\Omega} \le c \norm[0]{v}_{1,h} \le c \tnorm{\boldsymbol{v}}_{v}
  \qquad \forall \boldsymbol{v} \in \boldsymbol{V}(h).
\end{equation}
On the extended pressure space
$\boldsymbol{Q}(h) := \boldsymbol{Q}_h + (L_0^2(\Omega) \cap
H^1(\Omega)) \times H_0^{1/2}(\Gamma^0)$ we define the norm
\begin{equation*}
  \tnorm{\boldsymbol{q}}_p^2
  := \norm[0]{q}_{\Omega}^2 + \sum_{K \in \mathcal{T}_h} h_K\norm[0]{\bar{q}}_{\partial K}^2.
\end{equation*}
Finally, we define
\begin{equation*}
  \tnorm{(\boldsymbol{v}_h,\boldsymbol{q}_h)}_{v,\gamma,p}^2
  := \mu \tnorm{\boldsymbol{v}_h}_{v,\gamma}^2 + \mu^{-1} \tnorm{\boldsymbol{q}_h}_p^2
  \qquad \forall (\boldsymbol{v}_h,\boldsymbol{q}_h) \in \boldsymbol{V}_h \times \boldsymbol{Q}_h.
\end{equation*}

Note that
\begin{align*}
  a_h(\boldsymbol{v}_h, \boldsymbol{v}_h)
  &\ge c \tnorm{\boldsymbol{v}_h}_{v}^2
  && \forall \boldsymbol{v}_h \in \boldsymbol{V}_h,
  \quad
    a_h(\boldsymbol{u}, \boldsymbol{v})
  \le c \tnorm{\boldsymbol{u}}_{v'} \tnorm{\boldsymbol{v}}_{v'}
  && \forall \boldsymbol{u}, \boldsymbol{v} \in \boldsymbol{V}(h),
  \\
  g_h(\boldsymbol{v}_h, \boldsymbol{v}_h)
  &= |\boldsymbol{v}_h|_r^2 \ge 0
  && \forall \boldsymbol{v}_h \in \boldsymbol{V}_h,
  \quad
  |g_h(\boldsymbol{u}, \boldsymbol{v})|
  \le |\boldsymbol{u}|_r |\boldsymbol{v}|_r
  && \forall \boldsymbol{u}, \boldsymbol{v} \in \boldsymbol{V}(h),
\end{align*}
where the inequalities in the first row are shown in \cite[Lemmas 4.2
and 4.3]{Rhebergen:2017}. Combining these inequalities, and using
H\"older's inequality, we note that
\begin{align*}
  \mu a_h(\boldsymbol{u}, \boldsymbol{v}) + \gamma g_h(\boldsymbol{u}, \boldsymbol{v})
  &\le c \mu \tnorm{\boldsymbol{u}}_{v',\gamma} \tnorm{\boldsymbol{v}}_{v',\gamma}
  && \forall \boldsymbol{u}, \boldsymbol{v} \in \boldsymbol{V}(h),
  \\
  \mu a_h(\boldsymbol{v}_h, \boldsymbol{v}_h) + \gamma g_h(\boldsymbol{v}_h, \boldsymbol{v}_h)
  &\ge c \mu \tnorm{\boldsymbol{v}_h}_{v,\gamma}^2
  && \forall \boldsymbol{v}_h \in \boldsymbol{V}_h.
\end{align*}
By \cite[Proposition 3.6]{Cesmelioglu:2017} we have for
$w_h \in \cbr[0]{v_h \in V_h \,:\, b_h(\boldsymbol{q}_h,v_h) = 0 \
  \forall \boldsymbol{q}_h \in \boldsymbol{Q}_h}$,
\begin{equation*}
  o_h(w_h;\boldsymbol{v}_h, \boldsymbol{v}_h) \ge 0
  \qquad \forall \boldsymbol{v}_h \in \boldsymbol{V}_h,
\end{equation*}
and by \cite[Proposition 3.4]{Cesmelioglu:2017}, for
$w_1, w_2 \in V(h)$, $\boldsymbol{u} \in \boldsymbol{V}(h)$, and
$\boldsymbol{v} \in \boldsymbol{V}(h)$, we have
\begin{equation*}
  |o_h(w_1;\boldsymbol{u},\boldsymbol{v}) - o_h(w_2;\boldsymbol{u},\boldsymbol{v})|
  \le c \norm[0]{w_1-w_2}_{1,h} \tnorm{\boldsymbol{u}}_{v} \tnorm{\boldsymbol{v}}_{v}.
\end{equation*}
For $b_h$ we have that for all $\boldsymbol{v} \in \boldsymbol{V}(h)$
and $\boldsymbol{q} \in \boldsymbol{Q}(h)$ \cite[Lemma
4.8]{Rhebergen:2017}:
\begin{equation*}
  |b_h(\boldsymbol{q},v)| \le c \tnorm{\boldsymbol{v}}_{v} \tnorm{\boldsymbol{q}}_p
  \le c \tnorm{\boldsymbol{v}}_{v,\gamma} \tnorm{\boldsymbol{q}}_p.
\end{equation*}
An inf-sup condition for $b_h$ in terms of $\tnorm{\cdot}_{v}$ was
proven in \cite[Lemma 1]{Rhebergen:2018b}. The following lemma proves
an inf-sup condition with respect to $\tnorm{\cdot}_{v,\gamma}$.

\begin{lemma}[inf-sup condition]
  There exists a constant $c > 0$, independent of $h, \mu, \gamma$,
  such that
  \begin{equation*}
    \sup_{\boldsymbol{0}\ne \boldsymbol{v}_h \in \boldsymbol{V}_h}
    \frac{b_h(\boldsymbol{q}_h,v_h)}{\tnorm{\boldsymbol{v}_h}_{v,\gamma}}
    \ge
    c \tnorm{\boldsymbol{q}_h}_p \quad \forall \boldsymbol{q}_h \in \boldsymbol{Q}_h.
  \end{equation*}
\end{lemma}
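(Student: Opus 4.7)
The inf-sup of~\cite[Lemma 1]{Rhebergen:2018b} furnishes the same estimate with $\tnorm{\cdot}_v$ in the denominator in place of $\tnorm{\cdot}_{v,\gamma}$. Since $\tnorm{\boldsymbol{v}_h}_{v,\gamma}^2 = \tnorm{\boldsymbol{v}_h}_v^2 + \gamma\mu^{-1}|\boldsymbol{v}_h|_r^2$, a direct appeal to that result would give a bound that degrades as $\gamma\to\infty$. The plan is therefore to exhibit, for each $\boldsymbol{q}_h\in\boldsymbol{Q}_h$, a test function $\boldsymbol{v}_h\in\boldsymbol{V}_h$ that realises the $\gamma=0$ estimate and additionally lies in
\[
\boldsymbol{W}_h := \cbr[1]{\boldsymbol{w}_h\in\boldsymbol{V}_h \,:\, (w_h-\bar{w}_h)\cdot n = 0 \text{ on } \partial K \text{ for all } K\in\mathcal{T}_h}= \mathcal{N}(G),
\]
on which $|\boldsymbol{v}_h|_r=0$ and hence $\tnorm{\boldsymbol{v}_h}_{v,\gamma} = \tnorm{\boldsymbol{v}_h}_v$.

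The construction proceeds in two steps. First, lift $\boldsymbol{q}_h$ to a continuous velocity $v \in H_0^1(\Omega)^d$ via the continuous surjectivity of the divergence, and project by a BDM/RT-type Fortin operator into $V_h\cap H(\text{div};\Omega)$; because such a projection preserves both divergence and face normal moments, the identities for $b_1(q_h,\cdot)$ and $b_2(\bar{q}_h,\cdot)$ survive and yield $b_h(\boldsymbol{q}_h,v_h)\ge c\tnorm{\boldsymbol{q}_h}_p^2$ together with $\tnorm{(v_h,\av{v_h})}_v \le c\tnorm{\boldsymbol{q}_h}_p$, exactly as in~\cite[Lemma 1]{Rhebergen:2018b}. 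Second, define the hybrid trace $\bar{v}_h\in\bar{V}_h$ by setting its normal component on each face equal to the now single-valued $v_h\cdot n$ and choosing its tangential component freely, for instance as the tangential part of $\av{v_h}$; the boundary condition $\bar{v}_h = 0$ on $\partial\Omega$ is compatible because the continuous lifting vanishes there. By construction $\boldsymbol{v}_h\in\boldsymbol{W}_h$, and standard trace and inverse inequalities show that $\tnorm{\boldsymbol{v}_h}_v$ is bounded by a mesh-independent multiple of $\tnorm{\boldsymbol{q}_h}_p$.

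Combining these two ingredients with $|\boldsymbol{v}_h|_r=0$ yields
\[
\sup_{\boldsymbol{0}\ne\boldsymbol{w}_h\in\boldsymbol{V}_h}\frac{b_h(\boldsymbol{q}_h,w_h)}{\tnorm{\boldsymbol{w}_h}_{v,\gamma}}
\ge \frac{b_h(\boldsymbol{q}_h,v_h)}{\tnorm{\boldsymbol{v}_h}_{v,\gamma}}
= \frac{b_h(\boldsymbol{q}_h,v_h)}{\tnorm{\boldsymbol{v}_h}_v}
\ge c\tnorm{\boldsymbol{q}_h}_p,
\]
with constants independent of $h$, $\mu$, and $\gamma$. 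The main obstacle is verifying that the Fortin operator underlying the $\gamma=0$ inf-sup can be realised inside $V_h\cap H(\text{div};\Omega)$; if the construction in~\cite{Rhebergen:2018b} does not manifestly do so, a post-composition by a BDM projection resolves it, provided one checks that divergence, face normal moments, and the discrete velocity norm estimates are all preserved under this projection. The remainder is bookkeeping inherited from the $\gamma=0$ analysis.
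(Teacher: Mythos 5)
There is a genuine gap, and it is not a bookkeeping issue: your single test function cannot control the trace-pressure part of $\tnorm{\boldsymbol{q}_h}_p$. By \cref{eq:globalHdivenforce}, for any $\boldsymbol{v}_h\in\boldsymbol{V}_h$ one has $b_2(\bar{q}_h,v_h)=\sum_{K}\langle (v_h-\bar{v}_h)\cdot n,\bar{q}_h\rangle_{\partial K}$, so on your space $\boldsymbol{W}_h=\mathcal{N}(G)$ (where $(v_h-\bar{v}_h)\cdot n\equiv 0$) the pairing $b_2(\bar{q}_h,\cdot)$ vanishes identically; equivalently, your BDM-projected $H^1_0$ lifting is $H(\mathrm{div})$-conforming with zero normal trace on $\partial\Omega$, so all interior jumps $\jump{v_h\cdot n}$ and the boundary term are zero. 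Hence $b_h(\boldsymbol{q}_h,v_h)=b_1(q_h,v_h)$ for your choice, which controls only $\norm[0]{q_h}_{\Omega}$ and not $\sum_K h_K\norm[0]{\bar{q}_h}_{\partial K}^2$. The claimed inequality $b_h(\boldsymbol{q}_h,v_h)\ge c\tnorm{\boldsymbol{q}_h}_p^2$ fails already for $\boldsymbol{q}_h=(0,\bar{q}_h)$ with $\bar{q}_h\neq 0$, where your construction gives $b_h(\boldsymbol{q}_h,v_h)=0$. The strategy of staying inside $\mathcal{N}(G)$ so that $\tnorm{\cdot}_{v,\gamma}=\tnorm{\cdot}_v$ is therefore structurally incapable of producing the full inf-sup estimate: any velocity that "sees" $\bar{q}_h$ through $b_2$ must have $|\boldsymbol{v}_h|_r\neq 0$, which is exactly where the $\gamma\mu^{-1}|\cdot|_r^2$ term in the denominator becomes active and has to be dealt with explicitly.

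For comparison, the paper's proof uses your idea only for the cell pressure: its Step 1 takes the divergence lifting $v_{q_h}\in H_0^1(\Omega)^d$, applies the BDM interpolant, and exploits that $(\Pi_V v_{q_h},\av{\Pi_V v_{q_h}})\in\textbf{Ker}(g_h)$ together with \cref{eq:1hvs1ht} to get a $\gamma$-independent bound for $(q_h,0)$, restricted to test functions in $\textbf{Ker}(b_2)$. The trace pressure is then handled in a separate Step 2 via the local BDM lifting $L^{BDM}\bar{q}_h$ (a test function that deliberately leaves $\textbf{Ker}(g_h)$, following \cite[Lemma 3]{Rhebergen:2018b}), and the two partial inf-sup conditions are merged in Step 3 using \cite[Theorem 3.1]{Howell:2011}, noting $\textbf{Ker}(b_2)=\cbr[0]{\boldsymbol{v}_h:\,b_h((0,\bar{q}_h),v_h)=0\ \forall\bar{q}_h}$. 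Your proposal reproduces (essentially) Step 1 but omits the analogues of Steps 2 and 3, and those are precisely the parts where the $\gamma$-weighted seminorm interacts nontrivially with the bilinear form; to complete the argument you would need to supply a trace-pressure estimate of the type \cref{eq:infsupb2} and a combination argument, rather than a single Fortin-type test function.
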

\begin{proof}
  \textbf{Step 1.} Define the following two spaces:
  \begin{align*}
    \textbf{Ker}(g_h)
    &:= \cbr[0]{\boldsymbol{v}_h \in
      \boldsymbol{V}_h \,:\, ((v_h-\bar{v}_h)\cdot n)|_F = 0\ \forall F
      \in \mathcal{F}_h},
    \\
    \textbf{Ker}(b_2)
    &:= \cbr[0]{\boldsymbol{v}_h \in \boldsymbol{V}_h
      \,:\, v_h \in H(\text{div};\Omega)},
  \end{align*}
  Let $\Pi_V : H^1(\Omega)^d \to V_h$ be the BDM interpolation
  operator \cite[Section III.3]{Brezzi:book}. Note that for all
  $v \in H^1(\Omega)^d$ we have that
  $(\Pi_Vv, \av{\Pi_Vv}) \in \textbf{Ker}(g_h)$ and, using
  \cref{eq:1hvs1ht},
  \begin{equation}
    \label{eq:pivnormseq}
    \tnorm{(\Pi_Vv, \av{\Pi_Vv})}_{v,\gamma}
    \le c \norm[0]{\Pi_Vv}_{1,h}
    \le c \norm[0]{v}_{1,\Omega},
  \end{equation}
  where the second inequality was shown in \cite[Proposition
  10]{Hansbo:2002}. For a $q_h \in Q_h$ there exists a
  $v_{q_h} \in H_0^1(\Omega)^d$ such that $\nabla \cdot v_{q_h} = q_h$
  and $c \norm[0]{v_{q_h}}_{1,\Omega} \le \norm[0]{q_h}_{\Omega}$
  (see, for example, \cite[Theorem 6.5]{Pietro:book}). Using
  \cref{eq:pivnormseq},
  $\tnorm{(\Pi_Vv_{q_h}, \av{\Pi_Vv_{q_h}})}_{v,\gamma} \le c
  \norm[0]{q_h}_{\Omega}$. We find:
  \begin{equation}
    \label{eq:infsupqb0}
    \sup_{\boldsymbol{0}\ne \boldsymbol{v}_h \in \textbf{Ker}(b_2)}
    \frac{b_h((q_h,0),v_h)}{\tnorm{\boldsymbol{v}_h}_{v,\gamma}}
    \ge
    \sup_{\boldsymbol{0}\ne \boldsymbol{v}_h \in \textbf{Ker}(g_h)}
    \frac{b_h((q_h,0),v_h)}{\tnorm{\boldsymbol{v}_h}_{v,\gamma}}
    \ge
    \frac{b_h((q_h,0),\Pi_Vv_{q_h})}{\tnorm{(\Pi_Vv_{q_h}, \av{\Pi_Vv_{q_h}})}_{v,\gamma}}
    \ge c \norm[0]{q_h}_{\Omega},
  \end{equation}
  where the first inequality is because
  $\textbf{Ker}(g_h) \subset \textbf{Ker}(b_2)$.
  \\
  \textbf{Step 2.} Let $L^{BDM}:R_k(\partial K) \to P_k(K)^d$ be the
  BDM local lifting operator such that $(L^{BDM}\mu) \cdot n = \mu$
  and
  $\norm[0]{L^{BDM}\mu}_K \le c h_K^{1/2}\norm[0]{\mu}_{\partial K}$
  for all $\mu \in R_k(\partial K)$ (see \cite[Proposition
  2.10]{Du:book}). Then, using \cref{eq:1hvs1ht}, \cref{eq:Poincare},
  and \cite[Eq. (25)]{Rhebergen:2018b},
  \begin{equation*}
    \tnorm{(L^{BDM}\bar{q}_h, \av{L^{BDM}\bar{q}_h})}_{v,\gamma}
    \le c\norm[0]{L^{BDM}\bar{q}_h}_{1,h}
    \le c\tnorm{(L^{BDM}\bar{q}_h,0)}_{v}
    \le c\sum_{K \in \mathcal{T}_h}h_K^{-1/2}\norm[0]{\bar{q}_h}_{\partial K}.
  \end{equation*}
  Identical steps as in \cite[Lemma 3]{Rhebergen:2018b} then results
  in
  \begin{equation}
    \label{eq:infsupb2}
    \sup_{\boldsymbol{0}\ne \boldsymbol{v}_h \in \boldsymbol{V}_h}
    \frac{b_h((0,\bar{q}_h),v_h)}{\tnorm{\boldsymbol{v}_h}_{v,\gamma}}
    \ge
    c \tnorm{(0,\bar{q}_h)}_p.
  \end{equation}
  \textbf{Step 3.} Since
  $\textbf{Ker}(b_2) = \cbr[0]{\boldsymbol{v}_h \in \boldsymbol{V}_h
    \,:\, b_h((0,\bar{q}_h),\boldsymbol{v}_h)=0 \ \forall \bar{q}_h
    \in \boldsymbol{Q}_h}$, the result follows after combining
  \cref{eq:infsupqb0}, \cref{eq:infsupb2}, and \cite[Theorem
  3.1]{Howell:2011}.
\end{proof}

The next lemma proves well-posedness of \cref{eq:discrete_problem_sc}
by using the aforementioned properties of the forms $a_h$, $g_h$,
$o_h$, and $b_h$.

\begin{lemma}[Well-posedness]
  The HDG discretization \cref{eq:discrete_problem_sc} of the
  Navier--Stokes equations is well-posed for $\gamma \ge
  0$. Furthermore, the solution
  $(\boldsymbol{u}_h,\boldsymbol{p}_h) \in \boldsymbol{V}_h \times
  \boldsymbol{Q}_h$ to \cref{eq:discrete_problem_sc} satisfies:
  \begin{equation*}
    \tnorm{\boldsymbol{u}_h}_{v,\gamma} \le c \mu^{-1} \norm[0]{f}_{\Omega},
    \qquad
    \tnorm{(\boldsymbol{u}_h,\boldsymbol{p}_h)}_{v,\gamma,p} \le c \norm[0]{f}_{\Omega} + c\nu^{-2}\norm[0]{f}_{\Omega}^2.
  \end{equation*}
\end{lemma}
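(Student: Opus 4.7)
The plan is to follow the standard Brouwer fixed-point strategy used in \cite{Cesmelioglu:2017, Kirk:2019}, adapted to the augmented bilinear form $\mu a_h + \gamma g_h$. First I would establish the a priori bounds, since these yield the ball-invariance needed for the fixed-point argument, and then obtain existence from a corollary of Brouwer's theorem.

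For the velocity bound, I would set $\boldsymbol{v}_h = \boldsymbol{u}_h$ in \cref{eq:discrete_problem_sc_a}. Because $u_h$ satisfies \cref{eq:discrete_problem_sc_b}, $b_h(\boldsymbol{p}_h, u_h) = 0$ and $u_h$ lies in the discretely divergence-free subspace, so $o_h(u_h; \boldsymbol{u}_h, \boldsymbol{u}_h) \ge 0$ by \cite[Proposition 3.6]{Cesmelioglu:2017}. Using the coercivity $\mu a_h(\boldsymbol{u}_h,\boldsymbol{u}_h) + \gamma g_h(\boldsymbol{u}_h,\boldsymbol{u}_h) \ge c\mu \tnorm{\boldsymbol{u}_h}_{v,\gamma}^2$ established in the excerpt and the Poincar\'e-type inequality \cref{eq:Poincare}, the right-hand side $(f,u_h)_{\mathcal{T}}$ is bounded by $c\norm[0]{f}_{\Omega}\tnorm{\boldsymbol{u}_h}_{v,\gamma}$, and dividing by $\tnorm{\boldsymbol{u}_h}_{v,\gamma}$ yields the first bound.

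For the pressure bound, I would invoke the inf-sup condition proven just above, so that $c\tnorm{\boldsymbol{p}_h}_p$ is controlled by $\sup_{\boldsymbol{v}_h}b_h(\boldsymbol{p}_h,v_h)/\tnorm{\boldsymbol{v}_h}_{v,\gamma}$. Rearranging \cref{eq:discrete_problem_sc_a}, $b_h(\boldsymbol{p}_h, v_h) = (f,v_h)_{\mathcal{T}} - \mu a_h(\boldsymbol{u}_h,\boldsymbol{v}_h) - \gamma g_h(\boldsymbol{u}_h,\boldsymbol{v}_h) - o_h(u_h;\boldsymbol{u}_h,\boldsymbol{v}_h)$. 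The first three terms are bounded directly by the continuity estimates listed in the excerpt (after absorbing the gap between $\tnorm{\cdot}_{v'}$ and $\tnorm{\cdot}_{v}$ on $V_h$ via an inverse inequality, as in \cite{Rhebergen:2017}), producing contributions of size $c\norm[0]{f}_{\Omega} + c\mu\tnorm{\boldsymbol{u}_h}_{v,\gamma}$. The trilinear term is controlled using \cite[Proposition 3.4]{Cesmelioglu:2017} with $w_2=0$, giving $|o_h(u_h;\boldsymbol{u}_h,\boldsymbol{v}_h)| \le c\tnorm{\boldsymbol{u}_h}_v^2\tnorm{\boldsymbol{v}_h}_{v,\gamma}$. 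Substituting the velocity bound gives the quadratic-in-$f$ contribution $c\mu^{-2}\norm[0]{f}_{\Omega}^2$, and combining with the velocity bound in the composite norm yields the stated estimate.

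For existence, I would define the map $\mathcal{T}:V_h \to V_h$ by sending $w_h$ to the velocity component of the unique solution of the linearization in which $u_h$ in the argument of $o_h(u_h;\cdot,\cdot)$ is replaced by $w_h$. Well-posedness of this linear saddle-point system follows from the coercivity of $\mu a_h + \gamma g_h$ together with the inf-sup condition, via standard Brezzi theory. The argument above, repeated for the linearized problem, shows $\mathcal{T}$ maps the ball $\{v_h : \tnorm{(v_h,\av{v_h})}_{v,\gamma} \le c\mu^{-1}\norm[0]{f}_{\Omega}\}$ into itself, and continuity of $\mathcal{T}$ follows from continuity of $o_h$ in its first argument on the finite-dimensional space. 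A standard corollary of Brouwer's theorem then produces a fixed point, which is a solution of \cref{eq:discrete_problem_sc}. The main obstacle I anticipate is the bookkeeping of the $\mu$- and $\gamma$-dependencies in the pressure estimate, particularly ensuring that the nonlinear term does not introduce hidden $\gamma$-dependence; since the bound on $o_h$ uses only $\tnorm{\cdot}_v$, this should be handled cleanly by the dominance $\tnorm{\cdot}_v \le \tnorm{\cdot}_{v,\gamma}$.
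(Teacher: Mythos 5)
Your proposal is correct and follows essentially the same route as the paper, whose proof simply defers to \cite[Lemma 1]{Kirk:2019} with $a_h(\cdot,\cdot)$ replaced by $a_h(\cdot,\cdot)+\gamma g_h(\cdot,\cdot)$ and $\tnorm{\cdot}_v$ by $\tnorm{\cdot}_{v,\gamma}$; the ingredients you assemble (coercivity of the augmented form, positivity of $o_h$ on discretely divergence-free functions, the Poincar\'e inequality \cref{eq:Poincare}, the $\gamma$-dependent inf-sup condition, and a Brouwer fixed-point argument) are exactly those that proof uses. The only small refinement worth noting is that the fixed-point map should be defined on the discretely divergence-free subspace of $\boldsymbol{V}_h$, since the positivity of $o_h(w_h;\cdot,\cdot)$, and hence well-posedness of the linearized problem, requires the frozen advective field $w_h$ to satisfy \cref{eq:discrete_problem_sc_b}.
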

\begin{proof}
  The proof is identical to that of \cite[Lemma 1]{Kirk:2019} but with
  $a_h(\cdot, \cdot)$ replaced by
  $a_h(\cdot, \cdot) + \gamma g_h(\cdot, \cdot)$ and $\tnorm{\cdot}_v$
  replaced by $\tnorm{\cdot}_{v,\gamma}$.
\end{proof}

Let $\Pi_V : H^1(\Omega)^d \to V_h$ again be the BDM interpolation
operator. Furthermore, let $\bar{\Pi}_Vu$ be defined such that when
restricted to a face $F$ then $\bar{\Pi}_Vu|_F = \av{\Pi_Vu}$. Define
$e_u^I = u-\Pi_Vu$, $\bar{e}_u^I = \bar{u} - \bar{\Pi}_Vu$, and
$\boldsymbol{e}_u^I = (e_u^I,\bar{e}_u^I)$, where
$\bar{u} = u|_{\Gamma^0}$. Furthermore, let
$\bar{\Pi}_{L^2(\Gamma^0)}$ be the standard $L^2$-projection operator
onto $\bar{V}_h$.

The following lemma now determines error estimates for the discrete
velocity and pressure. We will write
$h := \max_{K \in \mathcal{T}_h} h_K$.

\begin{lemma}[Error estimates]
  Let $(u,p) \in H^{k+1}(\Omega)^d \times H^k(\Omega)$ be the solution
  to the Navier--Stokes equations \cref{eq:ns},
  $\boldsymbol{u}=(u,\bar{u})$, $\boldsymbol{p}=(p,\bar{p})$, and
  $(\boldsymbol{u}_h,\boldsymbol{p}_h) \in \boldsymbol{V}_h \times
  \boldsymbol{Q}_h$ the solution to \cref{eq:discrete_problem_sc} and
  assume that $\norm[0]{f}_{\Omega} \le c \mu^2$. Then
  \begin{equation*}
    \tnorm{\boldsymbol{u} - \boldsymbol{u}_h}_{v,\gamma} \le c h^k \norm[0]{u}_{k+1,\Omega},
    \qquad
    \norm[0]{p-p_h}_{\Omega} \le c h^k \norm[0]{p}_k + c \mu h^k \norm[0]{u}_{k+1,\Omega}.
  \end{equation*}
\end{lemma}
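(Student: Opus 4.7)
The plan is to adapt the error analysis of \cite{Kirk:2019} to the penalized discretization \cref{eq:discrete_problem_sc}, and the key observation driving the proof is that the penalty $g_h$ vanishes on $H(\text{div};\Omega)$-conforming functions, so the addition of $\gamma g_h$ will not pollute the final error estimate. First I would establish consistency: because $u \in H^1_0(\Omega)^d$ with $\nabla\cdot u = 0$, we have $\boldsymbol{u} = (u,u|_{\Gamma^0}) \in \boldsymbol{V}(h)$ with $(u - \bar{u})\cdot n = 0$ on every face, so $g_h(\boldsymbol{u},\boldsymbol{v}_h) = 0$ and $|\boldsymbol{u}|_r = 0$. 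Subtracting \cref{eq:discrete_problem_sc} from the exact equation then yields the Galerkin orthogonality
\begin{equation*}
  \mu a_h(\boldsymbol{u}-\boldsymbol{u}_h,\boldsymbol{v}_h)
  + \gamma g_h(\boldsymbol{u}-\boldsymbol{u}_h,\boldsymbol{v}_h)
  + o_h(u;\boldsymbol{u},\boldsymbol{v}_h) - o_h(u_h;\boldsymbol{u}_h,\boldsymbol{v}_h)
  + b_h(\boldsymbol{p}-\boldsymbol{p}_h, v_h) = 0
\end{equation*}
for all $\boldsymbol{v}_h \in \boldsymbol{V}_h$.

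Next, I would split the error via the BDM interpolant as $\boldsymbol{u}-\boldsymbol{u}_h = \boldsymbol{e}_u^I + \boldsymbol{e}_u^h$ with $\boldsymbol{e}_u^h = \Pi_V\boldsymbol{u} - \boldsymbol{u}_h \in \boldsymbol{V}_h$, using $\bar{\Pi}_V u$ as defined in the excerpt. Since $\Pi_V u \in V_h \cap H(\text{div};\Omega)$ and $\bar{\Pi}_V u = \av{\Pi_V u}$, the normal jump $(\Pi_V u - \bar{\Pi}_V u)\cdot n$ vanishes on every face, and hence $|\boldsymbol{e}_u^I|_r = 0$. Combined with \cref{eq:1hvs1ht} this gives $\tnorm{\boldsymbol{e}_u^I}_{v,\gamma} = \tnorm{\boldsymbol{e}_u^I}_{v}$, which is precisely the mechanism that removes the $\gamma$-dependence from the final bound.

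Testing the error equation with $\boldsymbol{v}_h = \boldsymbol{e}_u^h$, I would use coercivity of $\mu a_h + \gamma g_h$ on $\boldsymbol{V}_h$ to lower bound the left-hand side by $c\mu \tnorm{\boldsymbol{e}_u^h}_{v,\gamma}^2$, while the convection mismatch is handled by adding and subtracting $o_h(\Pi_V u;\cdot,\cdot)$ and invoking the Lipschitz estimate from \cite[Proposition 3.4]{Cesmelioglu:2017}, the monotonicity $o_h(w_h;\boldsymbol{v}_h,\boldsymbol{v}_h)\ge 0$ on the discretely divergence-free space, and the a priori bound $\tnorm{\boldsymbol{u}_h}_{v,\gamma} \le c\mu^{-1}\norm[0]{f}_{\Omega}$ already established. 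The smallness condition $\norm[0]{f}_{\Omega}\le c\mu^2$ is precisely what allows the convection contributions proportional to $\tnorm{\boldsymbol{e}_u^h}_{v,\gamma}^2$ to be absorbed into the left-hand side. The $b_h$-term is eliminated by replacing $\boldsymbol{p}_h$ with a suitable projection and noting that $b_h(\boldsymbol{q}_h, \Pi_V u - u) = 0$ by the commuting property of the BDM interpolant together with $\nabla\cdot u = 0$. Standard BDM interpolation estimates then yield $\tnorm{\boldsymbol{e}_u^h}_{v,\gamma}\le c h^k \norm[0]{u}_{k+1,\Omega}$, and the triangle inequality gives the velocity bound.

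Finally, the pressure estimate follows from the inf-sup lemma proved above: one selects $\boldsymbol{v}_h \in \boldsymbol{V}_h$ attaining the inf-sup supremum for $\boldsymbol{q}_h = \boldsymbol{p}_h - \Pi_Q\boldsymbol{p}$, uses the error equation to express $b_h(\boldsymbol{p}_h - \boldsymbol{p}, v_h)$ in terms of the already-controlled velocity error and convection mismatch, and closes the estimate using continuity of $\mu a_h + \gamma g_h$ in $\tnorm{\cdot}_{v,\gamma}$ (again with vanishing $g_h$-contribution on the interpolation part), together with interpolation estimates on the pressure. I expect the main obstacle to be the book-keeping of the nonlinear term, verifying that every factor of $\gamma$ that appears on test-function bounds is matched by a vanishing $g_h$-contribution, so that the constants in the final estimate are genuinely independent of $\gamma$; this is also the point at which the small-data hypothesis $\norm[0]{f}_{\Omega}\le c\mu^2$ is essential.
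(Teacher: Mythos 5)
Your proposal is correct and follows essentially the same route as the paper: the paper's proof simply invokes the analysis of \cite{Kirk:2019} with $a_h$ replaced by $a_h+\gamma g_h$ and the norms replaced by their $\gamma$-weighted versions, and its only new ingredient is exactly the observation you make central — that the exact solution and the BDM interpolation error lie in the kernel of $g_h$ (via \cref{eq:1hvs1ht}), so $\tnorm{\boldsymbol{e}_u^I}_{v',\gamma}=\tnorm{\boldsymbol{e}_u^I}_{v'}$ and the final bounds are $\gamma$-independent. Your write-up just spells out the Kirk-type energy and inf-sup steps (coercivity, convection handling with the small-data assumption, pressure via the new inf-sup lemma) that the paper cites by reference.
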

\begin{proof}
  Consider first the velocity error estimate. We have
  \begin{equation}
    \label{eq:triineq}
    \tnorm{\boldsymbol{u} - \boldsymbol{u}_h}_{v,\gamma}
    \le
    c \tnorm{\boldsymbol{e}_u^I}_{v',\gamma}
    =
    c \tnorm{\boldsymbol{e}_u^I}_{v'}
    =
    c \del[2]{\norm[0]{e_u^I}_{1,h}^2 + \sum_{K \in \mathcal{T}_h}\norm[0]{\partial_ne_u^I}_{\partial K}^2}^{1/2}
    \le c \tnorm{(e_u^I,u-\bar{\Pi}_{L^2(\Gamma^0)}u)}_{v'},
  \end{equation}
  where the first inequality can be shown using identical steps as in
  the proof of \cite[Theorem 1]{Kirk:2019}, but with
  $a_h(\cdot, \cdot)$ replaced by
  $a_h(\cdot, \cdot) + \gamma g_h(\cdot, \cdot)$, $\tnorm{\cdot}_v$
  replaced by $\tnorm{\cdot}_{v,\gamma}$, and $\tnorm{\cdot}_{v'}$
  replaced by $\tnorm{\cdot}_{v',\gamma}$. The first equality is by
  \cref{eq:1hvs1ht}, the second equality is by definition of
  $\norm[0]{\cdot}_{1,h}$, and the last inequality is by
  \cref{eq:Poincare}. The result follows by using \cite[Lemma
  9]{Rhebergen:2020}.

  For the pressure error estimate, replace $a_h(\cdot, \cdot)$ by
  $a_h(\cdot, \cdot) + \gamma g_h(\cdot, \cdot)$, $\tnorm{\cdot}_v$ by
  $\tnorm{\cdot}_{v,\gamma}$, and $\tnorm{\cdot}_{v'}$ by
  $\tnorm{\cdot}_{v',\gamma}$ in the proof of \cite[Lemma
  3]{Kirk:2019} to find
  \begin{equation*}
    \norm[0]{p-p_h}_{\Omega} \le c h^k \norm[0]{p}_k + c \mu \tnorm{\boldsymbol{e}_u^I}_{v',\gamma}.
  \end{equation*}
  The result follows after using the same steps as used in
  \cref{eq:triineq} to bound $\tnorm{\boldsymbol{e}_u^I}_{v',\gamma}$.
\end{proof}

%------------------------------------------------------------------------------
\section{More on Schur complement preconditioners and their PETSc implementation}
\label{app:PETSc}

As described in \cite{petsc-user-ref} the inverse of the Schur
complement factorization is implemented in PETSc as
\begin{equation}
  \label{eq:full_schur}
  \begin{pmatrix}
    A_{00}^{-1} & 0\\
    0 & I
  \end{pmatrix}
  \begin{pmatrix}
    I & -A_{01}\\
    0 & I
  \end{pmatrix}
  \begin{pmatrix}
    I & 0\\
    0 & S^{-1}
  \end{pmatrix}
  \begin{pmatrix}
    I & 0\\
    -A_{10}A_{00}^{-1} & I
  \end{pmatrix}
\end{equation}
Inverses are approximated via Krylov subspace solvers--denoted KSP in
PETSc. PETSc offers various levels of completeness of
\cref{eq:full_schur}. One is \texttt{full} which includes all the
components of \cref{eq:full_schur}. The \texttt{lower} factorization
option, which is what is used in all field splits outlined in this
paper, drops a block matrix multiplication, and after some
rearrangement yields:
\begin{equation}
  \label{eq:lower_schur}
  \begin{pmatrix}
    I & 0\\
    0 & S^{-1}
  \end{pmatrix}
  \begin{pmatrix}
    I & 0\\
    -A_{10} & I
  \end{pmatrix}
  \begin{pmatrix}
    A_{00}^{-1} & 0\\
    0 & I
  \end{pmatrix}
\end{equation}
\Cref{eq:lower_schur} explicitly requires only one $A_{00}^{-1}$
approximate inversion via KSP, which is known as the \emph{lower} KSP
after our lower factorization choice. However, we note that
approximate inversion of $S$ via a Krylov subspace solver involves
multiplication by $S$ which itself introduces additional
$\textrm{KSP}(A_{00})$ because
$S = A_{11} - A_{10}\textrm{KSP}(A_{00})A_{01}$. Consequently every
iteration of KSP($S$) involves a nested/inner solve of
KSP($A_{00}$). PETSc allows specifying different preconditioning
options for lower and inner KSP($A_{00}$) solves, but by default it
uses the same preconditioning options for both cases.

%------------------------------------------------------------------------------
\end{document}